\DeclareMathOperator{\Imm}{Im}
\newcommand{\field}[1]{\mathbb{#1}}
\newcommand{\Z}{\field{Z}}
\newcommand{\e}{\epsilon}
\newcommand{\F}{\field{F}}
\newcommand{\N}{\field{N}}
\newcommand{\lr}{{ \longrightarrow }}
\newtheorem {theorem}{Theorem}[section]
\newtheorem {proposition}[theorem]{Proposition}
\newtheorem {lemma}[theorem]{Lemma}
\newtheorem {corollary}[theorem]{Corollary}
\theoremstyle{definition}
\theoremstyle{remark}
\numberwithin{equation}{section}
\begin{document}
\bibliographystyle{plain}
\title[Fractal Structures]{Searching for Fractal Structures in the Universal Steenrod Algebra at Odd Primes}
\author{Maurizio Brunetti}\author{Adriana Ciampella}
%\author{}
%\author{}
\keywords{Universal Steenrod Algebra, Cohomology operations}
\address{\, \newline
Dipartimento di Matematica e applicazioni,\newline
Universit\`a di Napoli \char'134 Federico II'',\newline
 Piazzale Tecchio 80   I-80125 Napoli, 
Italy. \newline \newline
 E-mail: mbrunett@unina.it, ciampell@unina.it}

% E-mail: mbrunett@unina.it,  ciampell@unina.it, lomonaco@unina.it

\begin{abstract}  Unlike the $p=2$ case, the universal Steenrod Algebra ${\mathcal Q} (p) $ at odd primes does not have a fractal structure that preserves the length of monomials. Nevertheless, when $p$ is odd we detect inside $\mathcal Q(p)$  two different families of nested subalgebras each isomorphic (as length-graded algebras) to the respective starting element of the sequence. 
\end{abstract}

\subjclass[2010]{13A50, 55S10}
\maketitle

\section{Introduction} 
Let $p$ be any prime. The so-called universal Steenrod algebra  ${\mathcal Q}(p)$ is an $\F_p$-algebra extensively studied by the authors (see, for instance, \cite{ManuMath}-\cite{Funda}). On its first appearance, it has been described as the algebra of cohomology operations in the category of $H_\infty$-ring spectra (see \cite{May}). Invariant-theoretic descriptions of ${\mathcal Q(p)}$ can be found in \cite{CL} and \cite{L0}.  
When $p$ is an odd prime, the augmentation ideal of $\mathcal Q(p)$ is the free $\F_p$-algebra over the set 
\begin{equation} \mathcal S_p = \{ \, z_{\epsilon,k} \; \vert \; (\epsilon,k)  \in  \{0,1\} \times \Z \, 	\}
\end{equation}  subject to the set of relations
\begin{equation}\label{relaz} \mathcal R_p= \{ \, R(\epsilon, k,n), \, S(\epsilon, k,n) \; \vert \; (\epsilon, k,n) \in  \{0,1\} \times \Z 	\times \N_0 \, \},
\end{equation}
where
\begin{equation}\label{Rkn}
R(\e, k,n)=z_{\e,pk-1-n}z_{0,k}+\sum_j (-1)^j \binom{(p-1)(n-j)-1}{ j}z_{\e,pk-1-j}z_{0,k-n+j},
\end{equation}
and
\begin{equation}\label{Skn}
\begin{aligned}
S(\e, k,n)&=z_{\e,pk-n}z_{1,k}+\sum_j (-1)^{j+1} \binom{(p-1)(n-j)-1}{ j}z_{\e,pk-j}z_{1,k-n+j}\\
\,&+(1-\e)\sum_j(-1)^{j+1} \binom{(p-1)(n-j)}{ j}z_{1,pk-j}z_{0,k-n+j}.\\
\end{aligned}
\end{equation}
Such relations are known as {\it generalized Adem relations}. 

 The algebra $\mathcal Q(p)$ is related to many Steenrod-like operations. For instance to those acting on the cohomology of a graded cocommutative Hopf algebra  (\cite{Viet}, \cite{Li}), or the Dyer-Lashof operations on the homology of infinite loop spaces (\cite{AK} and \cite{May2}).  Details of such connections, at least for $p=2$, can be found in \cite{BLMS}. In particular, the ordinary Steenrod algebra $\mathcal A(p)$ is a quotient of $\mathcal Q(p)$. At odd primes, the algebra epimorphism  is determined by
\begin{equation}
\zeta :  z_{\e,k} \longmapsto \begin{cases} \beta^{\e} P^k \quad \text{if $k \geq 0$,}\\
0 \quad \qquad \text{otherwise.}
\end{cases}
\end{equation}
The kernel of the map $\zeta$ turns out to be the principal ideal  generated by  $z_{0,0}-1$.

All monic monomials in ${\mathcal Q}(p) $, with the exception of $z_{\emptyset} =1$ have the form 
 \begin{equation}\label{mon} z_I=z_{\e_1,i_1}z_{\e_2,i_2}\cdots z_{\e_m,i_m},
 \end{equation}
  where the string $I=(\e_1,i_1;\e_2, i_2; \dots ;\e_m, i_m)$ is  the {\it label} of the monomial $z_I$. 
  By {\em length}  of a monomial $z_I$ of type \eqref{mon} we just mean the integer $m$, while the length of  any $\rho \in \F_p \subset \mathcal Q(p)$ is defined to be $0$. 
  Since Relations \eqref{Rkn} and \eqref{Skn} are homogeneous with respect to length, the algebra $\mathcal Q(p)$ can be regarded as a graded object.
  
  A monomial and its label are said to  be admissible if $i_j \geq pi_{j+1}+\e_{j+1}$ for any $j=1,\dots , m-1$. We also consider $z_{\emptyset}=1 \in \F_p \subset {\mathcal Q}(p)$ admissible. The set ${\mathcal B}$ of all monic admissible monomials forms an $\F_p$-linear basis for  ${\mathcal Q}(p)$
 (see \cite{CL}).
 
Through two different approaches, in \cite{ApplMS}  and \cite{RendASFM} it has been shown that ${\mathcal Q}(2)$  has a fractal structure given by a sequence of nested subalgras ${\mathcal Q}_s$, each isomorphic to ${\mathcal Q}$. 
The interest in searching for fractal structures inside algebras of (co-)homology operations initially arouse in \cite{Monks}, where such structures were used as a tool to establish the nilpotence height of some elements in $\mathcal A(p)$. Results in the same vein are in \cite{Karaka}. 

Recently, in \cite{BolMex} the authors proved that  no length-preserving strict monomorphisms turn out to exist in $\mathcal Q(p)$ when $p$ is odd. Hence no descending chain of isomorphic subalgebras starting with $\mathcal Q(p)$ exists for $p>2$. Results in \cite{BolMex}  did not exclude the existence of fractal structures for proper subalgebras of $\mathcal Q(p)$. As a matter of fact,  the subalgebras ${\mathcal Q}^0$ and ${\mathcal Q}^1$  generated by the $z_{0,h}$'s and the $z_{1,k}$'s respectively (together with $1$) turn out to have self-similar shapes, as stated in our Theorem \ref{teo1}, our main result.

\begin{theorem}\label{teo1}  Let $p$ be any odd prime. For any $\e \in \{0,1\}$ there is a chain of nested subalgebras of $\mathcal Q(p)$
$$ {\mathcal Q}_0^\e \supset {\mathcal Q}_1^\e \supset {\mathcal Q}_2^\e \supset \dots \supset {\mathcal Q}_s^e \supset {\mathcal Q}_{s+1}^\e\supset \dots $$
each isomomorphic to ${\mathcal Q}_0^\e={\mathcal Q}^\e$ as length-graded algebras. 
\end{theorem}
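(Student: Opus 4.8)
The plan is to reduce the whole statement to the construction of a single length-graded algebra monomorphism $\psi_\epsilon\colon\mathcal{Q}^\epsilon\to\mathcal{Q}^\epsilon$ whose image is a proper subalgebra, and then to iterate it. Indeed, granting such a $\psi_\epsilon$, put $\mathcal{Q}_s^\epsilon:=\psi_\epsilon^{\,s}(\mathcal{Q}^\epsilon)$. Each $\psi_\epsilon^{\,s}$ is an injective length-preserving homomorphism, hence an isomorphism of length-graded algebras onto $\mathcal{Q}_s^\epsilon$; moreover, since $\psi_\epsilon^{\,s}$ is injective and $\psi_\epsilon(\mathcal{Q}^\epsilon)\subsetneq\mathcal{Q}^\epsilon$, one gets $\mathcal{Q}_{s+1}^\epsilon=\psi_\epsilon^{\,s}\bigl(\psi_\epsilon(\mathcal{Q}^\epsilon)\bigr)\subsetneq\psi_\epsilon^{\,s}(\mathcal{Q}^\epsilon)=\mathcal{Q}_s^\epsilon$. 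This delivers the strictly descending chain starting at $\mathcal{Q}_0^\epsilon=\mathcal{Q}^\epsilon$, with every term isomorphic to $\mathcal{Q}^\epsilon$.

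To build $\psi_\epsilon$ I would start from the presentation of $\mathcal{Q}^\epsilon$: by the admissible basis $\mathcal{B}$ of \cite{CL}, $\mathcal{Q}^\epsilon$ is the free $\F_p$-algebra on $\{z_{\epsilon,k}\mid k\in\Z\}$ modulo exactly those relations of $\mathcal{R}_p$ that involve only $\epsilon$-generators, i.e. the $R(0,k,n)$ when $\epsilon=0$ and the $S(1,k,n)$ when $\epsilon=1$ (the summand carrying the factor $(1-\epsilon)$ in $S$ drops out). A length-preserving map must send each generator to a length-one element, and the natural admissibility-compatible choice is the affine index rescaling
\[
\psi_\epsilon(z_{\epsilon,k})=z_{\epsilon,\,pk-1+\epsilon}.
\]
One checks directly that $k\mapsto pk-1+\epsilon$ carries the admissibility inequality $i_j\ge p\,i_{j+1}+\epsilon$ to itself, so $\psi_\epsilon$ sends admissible monomials to admissible monomials and distinct labels to distinct labels.

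The heart of the argument, and the step I expect to be the main obstacle, is well-definedness: that $\psi_\epsilon$ annihilates the defining relations. The computation I would carry out is to show that it sends each generating relation to another member of the same family,
\[
R(0,k,n)\longmapsto R(0,\,pk-1,\,pn),\qquad S(1,k,n)\longmapsto S(1,\,pk,\,pn).
\]
Matching indices term by term shows that the $j$-th summand on the left is carried to the $(pj)$-th summand on the right, and the remaining comparison of structure constants reduces (with $m=(p-1)(n-j)$) to the two congruences $\binom{pm-1}{pj}\equiv\binom{m-1}{j}\pmod p$ and $\binom{(p-1)(pn-J)-1}{J}\equiv0\pmod p$ for $J\not\equiv 0\pmod p$. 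Both follow from Lucas' theorem: in the first, $pm-1$ has base-$p$ digits $(m-1,\,p-1)$ while $pj$ has digits $(j,\,0)$; in the second, the last base-$p$ digit of the top, which is $r-1$ when $J\equiv r\not\equiv0$, is strictly smaller than that of the bottom. This is precisely the delicate point distinguishing $\mathcal{Q}^\epsilon$ from $\mathcal{Q}(p)$: the single-type relations $R(0,\cdot)$ and $S(1,\cdot)$ are stable under the rescaling, whereas the mixed relation $S(0,k,n)$ is not, which is why no such structure survives on all of $\mathcal{Q}(p)$ (cf. \cite{BolMex}).

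Finally I would record injectivity and properness. Injectivity is immediate from the admissibility-preservation above: $\psi_\epsilon$ maps the basis $\mathcal{B}$ injectively into itself, hence carries it to a linearly independent set, so it is a monomorphism. Properness holds because the length-one part of the image is $\Spa\{z_{\epsilon,m}\mid m\equiv \epsilon-1\ (\mathrm{mod}\ p)\}$, a proper subspace of $\Spa\{z_{\epsilon,m}\mid m\in\Z\}$ (for instance $z_{\epsilon,\epsilon}$ is not in the image). Combining these facts with the iteration described in the first paragraph completes the proof, with $\psi_\epsilon^{\,s}(z_{\epsilon,k})=z_{\epsilon,\,p^{s}k-(p^{s}-1)/(p-1)}$ when $\epsilon=0$ and $\psi_\epsilon^{\,s}(z_{\epsilon,k})=z_{\epsilon,\,p^{s}k}$ when $\epsilon=1$.
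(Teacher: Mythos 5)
Your proposal is correct and follows essentially the same route as the paper: the same rescaling maps $z_{0,k}\mapsto z_{0,pk-1}$ and $z_{1,k}\mapsto z_{1,pk}$, the same Lucas-theorem congruences showing $R(0,k,n)\mapsto R(0,pk-1,pn)$ and $S(1,k,n)\mapsto S(1,pk,pn)$, and the same injectivity argument via preservation of admissible monomials, with the nested subalgebras obtained as images of the iterates. The only cosmetic difference is that you define ${\mathcal Q}_s^\e$ directly as $\psi_\e^s({\mathcal Q}^\e)$, whereas the paper first presents each ${\mathcal Q}_s^\e$ by generators $z_{\e,p^sk-\alpha_s}$ (resp.\ $z_{1,p^sk}$) and relations, then proves these coincide with the images.
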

Theorem \ref{teo1} relies on the existence of two suitable algebra monomorphisms
\begin{equation} \phi : {\mathcal Q}^0 \lr {\mathcal Q}^0 \quad \text{and} \quad  \psi : {\mathcal Q}^1 \lr {\mathcal Q}^1.
\end{equation}
Indeed, we just set  ${\mathcal Q}_s^0=\phi^s({\mathcal Q}^0)$ and ${\mathcal Q}_s^1=\phi^s({\mathcal Q}^1)$, the restrictions $\phi\hspace{-0.15em}\mid_{{\mathcal Q}_s^0}$ and  $\psi \hspace{-0.15em}\mid_{{\mathcal Q}_s^1}$ being the desired isomorphism between ${\mathcal Q}_s^\e$ and ${\mathcal Q}_{s+1}^\e$ $(\e \in \{0,1\})$.  

  For sake of completeness we point out that  the algebra $\mathcal Q (p)$ can also be filtered by the internal degree of its elements, defined on monomials as follows:  
  \begin{equation}
 \lvert \rho z_I   \rvert =  \begin{cases} \sum_h (2i_h(p-1) + \e_{i_h}), & \text{if $I=(\e_1,i_1;\e_2, i_2; \dots ;\e_m, i_m)$} \\ 0 & \text{if $I=\emptyset$.}
 \end{cases} \end{equation}
 In spite of its geometric importance, the internal degree will not play any role here. 
 
\section{A first descending chain of subalgebras} 

We first need to establish some congruential identities. Let $\N_0$ denote the set of all non-negative integers.
Fixed any prime $p$,  we  write \begin{equation}\label{bag}
 \sum_{i\geq 0} \gamma_i (m) p^i \quad \text{($0 \leq  \gamma_i (m) <p$)}  
\end{equation} 
to denote the $p$-adic expansion of a fixed $m \in \N_0$. The following well-known Lemma is a stardard device to compute  $\bmod \, p$ binomial coefficients.
\begin{lemma}[Lucas' Theorem]\label{l1} For any $(a,b) \in \N_0 \times \N_0$, 
the following congruential identity holds.
\begin{equation}\label{bah}
{a \choose b} \equiv {\prod}_{i \geq 0} {\gamma_i (a)  \choose \gamma_i (b)} \pmod p.
\end{equation}
\end{lemma}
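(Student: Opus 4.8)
The plan is to prove Lucas' Theorem by the standard generating-function technique in the polynomial ring $\F_p[x]$, exploiting the Frobenius endomorphism. The whole argument rests on reading off $\binom{a}{b} \bmod p$ as a single coefficient in a suitably factored expansion of $(1+x)^a$.

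First I would establish the basic congruence $(1+x)^{p^i} \equiv 1 + x^{p^i} \pmod p$ for every $i \geq 0$. The case $i=1$ follows from the observation that $\binom{p}{k} \equiv 0 \pmod p$ for $0 < k < p$, since $p$ divides the numerator but none of the factors in the denominator of $\binom{p}{k} = p!/\bigl(k!\,(p-k)!\bigr)$; the general case then follows by a short induction on $i$, raising both sides of the previous step to the $p$-th power and reapplying the case $i=1$.

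Next, writing the $p$-adic expansion $a = \sum_{i \geq 0} \gamma_i(a)\, p^i$ from \eqref{bag} (a finite sum, since $\gamma_i(a) = 0$ for $i$ large), I would factor
\[
 (1+x)^a = \prod_{i \geq 0} \bigl( (1+x)^{p^i} \bigr)^{\gamma_i(a)} \equiv \prod_{i \geq 0} \bigl( 1 + x^{p^i} \bigr)^{\gamma_i(a)} \pmod p.
\]
Comparing the coefficient of $x^b$ on both sides is the crux: on the left it is exactly $\binom{a}{b} \bmod p$, while on the right one expands each factor as $\bigl(1+x^{p^i}\bigr)^{\gamma_i(a)} = \sum_{j} \binom{\gamma_i(a)}{j} x^{j p^i}$ and collects all products of monomials whose exponents $\sum_i j_i p^i$ sum to $b$, subject to $0 \leq j_i \leq \gamma_i(a) < p$.

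The hard part --- really the only genuinely delicate point --- is the coefficient extraction on the right-hand side. Here I would invoke uniqueness of the base-$p$ representation of $b$: since each admissible $j_i$ lies in $\{0, \dots, p-1\}$, the equation $\sum_i j_i p^i = b$ forces $j_i = \gamma_i(b)$ for every $i$, leaving the single surviving term $\prod_{i \geq 0} \binom{\gamma_i(a)}{\gamma_i(b)}$. This also transparently accounts for the degenerate cases: whenever some digit satisfies $\gamma_i(b) > \gamma_i(a)$, the corresponding factor $\binom{\gamma_i(a)}{\gamma_i(b)}$ vanishes, consistently with the fact that no admissible choice of the $j_i$ then reconstructs $b$ (in particular this covers $b > a$, where $\binom{a}{b}=0$). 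Equating the two coefficients yields the congruential identity \eqref{bah} and completes the proof.
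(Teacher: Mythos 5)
Your proof is correct and complete, but it takes a genuinely different route from the paper: the paper does not prove Lucas' Theorem at all, it simply cites the literature (Karaca \cite{Karaka} and Steenrod--Epstein \cite{StEps}) and records the conventions ${0 \choose 0}=1$ and ${l \choose r}=0$ for $0 \leq l < r$. Your argument is the standard self-contained one: the Frobenius congruence $(1+x)^{p^i} \equiv 1+x^{p^i} \pmod p$, the factorization of $(1+x)^a$ along the $p$-adic digits of $a$, and coefficient extraction at $x^b$, where uniqueness of the base-$p$ representation (using that every admissible exponent digit $j_i$ satisfies $j_i \leq \gamma_i(a) < p$) isolates the single term $\prod_i \binom{\gamma_i(a)}{\gamma_i(b)}$. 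Your handling of the degenerate cases is also right, and it matters here: the vanishing of $\binom{\gamma_i(a)}{\gamma_i(b)}$ when $\gamma_i(b) > \gamma_i(a)$ is exactly the convention the paper needs later (e.g.\ in Corollary \ref{c2} and Lemma \ref{c3}, where the conclusion rests on a factor of the form $\binom{r-1}{r}$ being zero), so your proof confirms that the congruence holds with those conventions rather than assuming them. What each approach buys: the paper's citation keeps the exposition short for a standard classical fact, while your argument makes the lemma self-contained and makes visible why the digit-wise product formula is compatible with the vanishing conventions that the rest of Section~2 exploits.
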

\begin{proof} See \cite[p.~260]{Karaka} or \cite[I 2.6]{StEps}. Equation \ref{bah} follows the usual conventions: ${0 \choose 0}=1$, and  ${l\choose r} =0$ if $0 \leq l <r$.
\end{proof}
Congruence \eqref{bah} immediately yields
\begin{equation}\label{bax}
{p^ra \choose p^rb} \equiv {a \choose b} \pmod p \qquad \text{for every $r \geq 0$},
\end{equation}
since, in both cases, we find on the right side of \eqref{bah} the same products of binomial coefficients, apart from $r$ extra factors all equal to  ${0 \choose 0}=1$.
\begin{corollary}\label{c1}  For any $(\ell,t,h) \in \N_0 \times \N_0 \times \{ 1, \dots , p\}$, the following congruential identity holds.
\begin{equation}\label{plh} \binom{p\ell-h}{pt}\equiv \binom{\ell-1}{ t} \pmod p.
\end{equation}
\end{corollary}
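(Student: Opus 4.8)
The plan is to deduce \eqref{plh} directly from Lucas' Theorem (Lemma \ref{l1}) by reading off the base-$p$ digits of the numerator and of the denominator. The algebraic heart of the argument is the identity
\[
p\ell-h=p(\ell-1)+(p-h),
\]
valid for $\ell\geq 1$: since $h$ ranges over $\{1,\dots,p\}$, the remainder $p-h$ satisfies $0\leq p-h\leq p-1$ and is therefore a legitimate units digit in base $p$. In other words, subtracting any such $h$ from the multiple $p\ell$ triggers a single borrow that installs $p-h$ in the units place and lowers the higher-order part from $\ell$ to $\ell-1$.

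With this in hand I would record the two digit expansions
\[
\gamma_0(p\ell-h)=p-h,\qquad \gamma_i(p\ell-h)=\gamma_{i-1}(\ell-1)\quad(i\geq 1),
\]
and, on the denominator side,
\[
\gamma_0(pt)=0,\qquad \gamma_i(pt)=\gamma_{i-1}(t)\quad(i\geq 1),
\]
the latter being exactly the digit-shift already exploited in \eqref{bax}. Substituting both into \eqref{bah} yields
\[
\binom{p\ell-h}{pt}\equiv\binom{p-h}{0}\prod_{i\geq 1}\binom{\gamma_{i-1}(\ell-1)}{\gamma_{i-1}(t)}\pmod p.
\]

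Now $\binom{p-h}{0}=1$, and after the reindexing $j=i-1$ the remaining product is precisely the Lucas expansion of $\binom{\ell-1}{t}$; hence the right-hand side collapses to $\binom{\ell-1}{t}\pmod p$, which is the assertion. The only point demanding genuine care is the borrow bookkeeping in the first identity: one must verify that subtracting $h$ touches only the units digit and the carry into the next digit, including the boundary value $h=p$, which gives units digit $0$ and numerator $p(\ell-1)$. I would also note that the case $\ell=0$ falls outside the scope of Lemma \ref{l1}, since there $p\ell-h<0$, and so it must be excluded by the convention in force or treated separately. Beyond this digit-level verification the corollary is a routine substitution into Lucas' Theorem, so I anticipate no substantive obstacle.
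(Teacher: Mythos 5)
Your proposal is correct and follows essentially the same route as the paper: the same decomposition $p\ell-h=(p-h)+p(\ell-1)$ feeding into Lucas' Theorem, with the only cosmetic difference that you unfold the digit-shift product by hand where the paper simply cites \eqref{bax} with $r=1$. Your remark about the $\ell=0$ boundary case is a fair point of care that the paper glosses over, but it does not change the argument.
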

\begin{proof}Since $p\ell-h=(p-h) + p(\ell-1)$, we have $\gamma_0(p\ell-h)=p-h$. Note also that $\gamma_0(pt)=0$. According to Lemma \ref{l1}, we get
\begin{equation}\binom{p\ell-h}{pt} \equiv \binom{p-h}{0} \binom{p(\ell-1)}{pt} \pmod p.
\end{equation}
 We now use Congruence \ref{bax} for $r=1$, and the fact that ${k \choose 0} =1$ for all $k \in \N_0$.
\end{proof}
In order to make notation less cumbersome, we set
\begin{equation}
A(k,j) =  \binom{(p-1)(k-j)-1}{ j}.
\end{equation}
\begin{corollary}\label{c2} Let $(n,j)$ a couple of positive integers. Whenever $j\not \equiv 0 \pmod p$, the binomial coefficient $A(pn,j)$
is divisible by $p$.
\end{corollary}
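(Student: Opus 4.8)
The plan is to read off the divisibility from the single lowest $p$-adic digit via Lucas' Theorem (Lemma \ref{l1}). Unwinding the notation, $A(pn,j)=\binom{(p-1)(pn-j)-1}{j}$, so the whole question reduces to comparing $\gamma_0$ of the top entry with $\gamma_0(j)$.

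First I would reduce the top entry modulo $p$. Since $pn\equiv 0$ and $p-1\equiv -1\pmod p$, one gets
\[
(p-1)(pn-j)-1\equiv (-1)(-j)-1\equiv j-1\pmod p.
\]
Writing $\gamma_0(j)=r$, the hypothesis $j\not\equiv 0\pmod p$ means $1\le r\le p-1$, so $r-1$ is a legitimate residue in $\{0,\dots,p-2\}$ and therefore $\gamma_0\big((p-1)(pn-j)-1\big)=r-1<r=\gamma_0(j)$. By Lemma \ref{l1} the factor indexed by $i=0$ in the product \eqref{bah} is $\binom{r-1}{r}=0$, whence $A(pn,j)\equiv 0\pmod p$.

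The one point that needs care — and the main obstacle — is that the top entry $(p-1)(pn-j)-1$ is nonnegative only when $j\le pn-1$; as soon as $j\ge pn$ it becomes negative and Lemma \ref{l1} does not literally apply. To cover this range I would fall back on the reflection identity $\binom{-M}{j}=(-1)^{j}\binom{M+j-1}{j}$ with $M=(p-1)(j-pn)+1>0$. A short computation gives $M+j-1=p\big(j-(p-1)n\big)$, a positive multiple of $p$ (positive because $j\ge pn>(p-1)n$). Hence $\gamma_0(M+j-1)=0<r=\gamma_0(j)$, and Lucas' Theorem again forces $\binom{M+j-1}{j}\equiv 0\pmod p$; the sign $(-1)^{j}$ is immaterial modulo $p$, so $A(pn,j)\equiv 0$ in this case too. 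Combining the two ranges completes the argument.
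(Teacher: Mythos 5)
Your proof is correct, and its core argument is the same as the paper's: both apply Lucas' Theorem (Lemma \ref{l1}) to the last $p$-adic digit and observe that the bottom digit exceeds the top digit by one. Indeed, the paper writes $j=pl-h$ with $h\in\{1,\dots,p-1\}$ and obtains the vanishing factor $\binom{p-h-1}{p-h}$, which is exactly your $\binom{r-1}{r}$ under the substitution $r=\gamma_0(j)=p-h$. The one place where you go beyond the paper is the range $j\ge pn$, where the upper entry $(p-1)(pn-j)-1$ is negative. The paper's proof is silent there: its expression $\binom{(p-h-1)+pT}{(p-h)+p(l-1)}$ has $T<0$ in that range, so Lemma \ref{l1} does not literally apply to it either. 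If, as the conventions recalled in the proof of Lemma \ref{l1} suggest, binomial coefficients are declared to vanish whenever they fall outside the range $0\le b\le a$, that case is vacuous and your extra work is unnecessary; if instead the generalized (polynomial) binomial coefficient is intended, your reflection identity $\binom{-M}{j}=(-1)^j\binom{M+j-1}{j}$ with $M+j-1=p\bigl(j-(p-1)n\bigr)$ gives a correct and genuinely supplementary argument that the paper's own proof lacks. Either way, your write-up is complete and compatible with the paper's.
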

\begin{proof} If a fixed positive integer $j$ is not divisible by $p$, then there exists a unique couple $(l,h) \in \N \times  \{1, \dots , p-1\}$ such that  $j=pl-h$. 
Hence, setting $$T= (p-1)(n-l)+h-1,$$ we get  \begin{equation}\label{binom} A(pn,j)= \binom{(p-h-1)+pT}{(p-h)+p(l-1)} \equiv  \binom{p-h-1}{p-h} \cdot \binom{T}{l-1} \pmod p \end{equation}
by Lemma \ref{l1} and Equation \eqref{bax}.
Since $p-h-1<p-h$, the first factor on the right side of Equation \eqref{binom} is zero,  so the result follows. 
\end{proof}

\begin{lemma}\label{c3} Let $(s,n,j)$ a triple of positive integers. Whenever $j\not\equiv 0 \pmod{p^s}$,  the binomial coefficient $A(p^sn,j)$ is divisible by $p$. 
\end{lemma}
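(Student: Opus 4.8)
The plan is to induct on $s$, taking Corollary \ref{c2} as the base case $s=1$. So I would fix $s \geq 2$, assume the statement for $s-1$, and let $j$ satisfy $j \not\equiv 0 \pmod{p^s}$, splitting into two cases according to whether $p$ divides $j$. If $p \nmid j$, nothing new is required: rewriting $A(p^s n, j) = A\bigl(p\,(p^{s-1}n),\, j\bigr)$ and invoking Corollary \ref{c2} with $p^{s-1}n$ in the role of $n$ already yields $p \mid A(p^s n, j)$.

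The genuine case is $p \mid j$. Writing $j = p j''$, the hypothesis $j \not\equiv 0 \pmod{p^s}$ becomes $j'' \not\equiv 0 \pmod{p^{s-1}}$, so the inductive hypothesis will apply to $j''$ once the modulus is lowered. The crux is therefore the congruence
$$A(pm, pt) \equiv A(m,t) \pmod p,$$
which I would prove for $m = p^{s-1}n$ and $t = j''$; granting it, the inductive hypothesis gives $A(p^s n, j) = A(pm, pt) \equiv A(m, t) \equiv 0 \pmod p$, and the induction closes.

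Establishing this congruence is the only delicate point, since the numerator of $A(pm,pt) = \binom{p(p-1)(m-t)-1}{pt}$ is \emph{not} a multiple of $p$, so Congruence \eqref{bax} does not apply verbatim. Setting $N = m - t$, I would split $p(p-1)N - 1 = (p-1) + p\bigl((p-1)N - 1\bigr)$, exhibiting $p-1$ as the base-$p$ units digit of the numerator against the units digit $0$ of $pt$. Lucas' Theorem (Lemma \ref{l1}) then strips off the harmless factor $\binom{p-1}{0} = 1$ and leaves $\binom{(p-1)N - 1}{t} = A(m,t)$, exactly as wanted.

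The main obstacle is thus confined to bookkeeping the units digit created by the stray $-1$; once that digit is isolated, everything reduces to Corollary \ref{c2} and the inductive hypothesis. One could equally argue directly, without induction: if $r \in \{0, \dots, s-1\}$ is the least index with $\gamma_r(j) \neq 0$, the same splitting shows that the $r$-th base-$p$ digit of $(p-1)(p^s n - j) - 1$ equals $\gamma_r(j) - 1$, so the corresponding factor $\binom{\gamma_r(j)-1}{\gamma_r(j)}$ appearing in \eqref{bah} vanishes and divisibility follows at once.
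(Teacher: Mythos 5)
Your proof is correct and follows essentially the same route as the paper: induction on $s$ with Corollary \ref{c2} as the base case, a split according to whether $p$ divides $j$, and a Lucas-type digit-stripping reduction to the inductive hypothesis — indeed your crux congruence $A(pm,pt)\equiv A(m,t) \pmod p$ is exactly Corollary \ref{c1} with $h=1$ and $\ell=(p-1)(m-t)$, which is what the paper cites at the corresponding step. If anything your version is slightly tidier: for the $p\nmid j$ case you simply apply Corollary \ref{c2} with $p^{s-1}n$ in place of $n$, whereas the paper redoes the Lucas computation from scratch there; your closing non-inductive argument (locating the least nonzero digit $\gamma_r(j)$ and observing that the factor $\binom{\gamma_r(j)-1}{\gamma_r(j)}$ kills the product in \eqref{bah}) is also valid and would subsume both cases at once.
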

\begin{proof} We proceed by induction on $s$. The $s=1$ case is essentially Corollary \ref{c2}. 

 Suppose now $s>1$. The hypothesis on $j$ is equivalent to the existence of a suitable  $(b, i) \in \N \times \{ 1, \dots,  p^s-1 \}$ such that $j=p^sb-i$. Likewise, we can write
$i=pl-r$, for a certain $(l,r) \in \{1, \dots, p^{s-2}  \} \times \{0, \dots, p-1 \}$. 

We now distinguish two cases.
If $r=0$, the binomial coefficient $A(p^sn,j)$ has the form $\binom{p\ell-h}{pt}$ where
$$ \ell= (p-1)(p^{s-1}n-p^{s-1}b+l), \qquad h=1, \qquad \text{and} \qquad t= p^{s-1}b-l.$$
By Corollary \ref{c1}, we get
$$ A(p^sn,j) \equiv A (p^{s-1}n, p^{s-1}b-l) \pmod p,$$
and the latter is divisible by $p$ by the inductive hypothesis.

Assume now $1\leq r\leq p-1$. In this case, 
\begin{equation}\label{t'}  A(p^sn,j)= \binom{r-1 +pT'}{r+ p(p^{s-1} b -l)} 
\end{equation}
where $T'=  (p-1)(p^{s-1}n-p^{s-1}b+l)-r$.
Therefore, by Lemma \ref{l1} we get
\begin{equation}\label{ko} A(p^sn,j) \equiv \binom{r-1}{r}\cdot \binom{T'}{p^{s-1}b-l} \pmod p.
\end{equation}
The right side of Equation \ref{ko} vanishes, since $r-1<r$, and the proof is over.
\end{proof}
Lemmas and Corollaries proved so far  will be helpful to reduce, in some particular cases, the number of potentially non-zero binomial coefficients in \eqref{Rkn} and in \eqref{Skn}.
For instance, for any $(h,n) \in \Z \times \N_0$, relations of type $R(\e,p^sh-\alpha_s,p^sn)$, where
$$\alpha_s ={p^s-1 \over p-1} \qquad (s \geq 1),$$ 
only involve generators in the set 
\begin{equation}\label{sem}
\mathcal T_{(\e,s)}= \{ z_{\e,p^sm-\alpha_s} \, \vert \, m \in \Z \}
\end{equation} as stated in the following Proposition.
\begin{proposition}\label{p1} Let $(\e, k,n,s)$ a fixed $4$-tuple in $\{0,1\} \times \Z 	\times \N_0 \times \N$. The polynomial $R(\e, p^sk-\alpha_s, p^sn)$ in \eqref{Rkn} is actually equal to
$$
z_{\e, p^s(pk-1-n)-\alpha_s}z_{0,p^sk-\alpha_s}
\,+\sum_j(-1)^j A(n,j) \, z_{\e,p^s(pk-1-j)-\alpha_s}z_{0,p^s(k-n+j)-\alpha_s}.
$$
\end{proposition}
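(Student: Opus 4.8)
The plan is to substitute the specialized indices $k \mapsto p^s k - \alpha_s$ and $n \mapsto p^s n$ directly into the definition \eqref{Rkn} and then to simplify using the congruences established above. Written out, $R(\e, p^s k - \alpha_s, p^s n)$ has leading monomial $z_{\e,\, p(p^sk-\alpha_s)-1-p^sn}\, z_{0,\, p^sk-\alpha_s}$ together with the sum
$$\sum_j (-1)^j A(p^s n, j)\, z_{\e,\, p(p^sk-\alpha_s)-1-j}\, z_{0,\, (p^sk-\alpha_s)-p^sn+j}.$$
The proof therefore splits into two essentially independent bookkeeping tasks: reconciling the subscripts of the generators, and reducing the binomial coefficients.

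I would first dispose of the subscripts by means of the single arithmetic identity $(p-1)\alpha_s = p^s-1$, which is immediate from $\alpha_s=(p^s-1)/(p-1)$. It gives $p\alpha_s + 1 = p^s + \alpha_s$, whence $p(p^sk-\alpha_s)-1 = p^s(pk-1)-\alpha_s$. Consequently the first subscript of the generic term equals $p^s(pk-1)-\alpha_s-j$ and the second equals $p^s(k-n)-\alpha_s+j$; in particular the leading monomial is exactly $z_{\e,\, p^s(pk-1-n)-\alpha_s}\, z_{0,\, p^sk-\alpha_s}$, as claimed.

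The real content is the reduction of the sum. By Lemma \ref{c3} the coefficient $A(p^sn,j)$ vanishes $\bmod\, p$ unless $p^s \mid j$, so only the indices $j=p^sj'$ survive; for these the two subscripts found above become $p^s(pk-1-j')-\alpha_s$ and $p^s(k-n+j')-\alpha_s$, precisely the generators appearing in the statement. For the surviving coefficients I would prove $A(p^sn,p^sj')\equiv A(n,j') \pmod p$: setting $M=(p-1)(n-j')$ this is the congruence $\binom{p^sM-1}{p^sj'}\equiv\binom{M-1}{j'}$, which follows by applying Corollary \ref{c1} with $h=1$ exactly $s$ times, peeling off one factor of $p$ from numerator and denominator at each stage. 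Finally, since $p$ is odd, $p^s$ is odd and $(-1)^{p^sj'}=(-1)^{j'}$, so the signs are preserved after the substitution $j\mapsto p^sj'$.

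The main obstacle is not any single estimate but keeping the substitution $j=p^sj'$ coherent simultaneously across the sign, the binomial coefficient, and both subscripts; once Lemma \ref{c3} has pruned the summation to multiples of $p^s$, the remaining verification is routine, being driven entirely by the identity $(p-1)\alpha_s=p^s-1$ together with the iterated form of Corollary \ref{c1}.
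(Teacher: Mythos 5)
Your proposal is correct and follows essentially the same route as the paper's proof: substitute the specialized indices, prune the sum to multiples of $p^s$ via Lemma \ref{c3}, fix the subscripts with the identity $p\alpha_s+1=p^s+\alpha_s$, and reduce the coefficients by iterating Corollary \ref{c1} with $h=1$ (the paper's Equation \eqref{apnj}), using oddness of $p$ for the signs. Your write-up merely makes explicit, via $M=(p-1)(n-j')$, the ``as many times as necessary'' iteration that the paper leaves implicit.
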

\begin{proof} By definition (see \eqref{Rkn}), $R(\e, p^sk-\alpha_s,p^sn)$ is equal to
$$
z_{\e,p(p^sk-\alpha_s)-1-p^sn}z_{0,p^sk-\alpha_s}
\,+\sum_l(-1)^l A(p^sn,l) \, z_{\e,p(p^sk-\alpha_s)-1-l}z_{0,p^sk-\alpha_s-p^sn+l}.
$$
According to Lemma \ref{c3},  the only possible non-zero coefficients in  the sum above occur when  $l\equiv 0 \pmod{p^s}$. Thus, we set $l=p^sj$ and write  $R(\e, p^sk-\alpha_s,p^sn)$ as
$$
z_{\e,p(p^sk-\alpha_s)-1-p^sn}z_{0,p^sk-\alpha_s}
\,+\sum_j(-1)^{p^sj} A (p^sn, p^sj)z_{\e,p(p^sk-\alpha_s)-1-p^sj}z_{0,p^sk-\alpha_s-p^sn+p^sj}.
$$
In such polynomial we can replace $z_{\e,p(p^sk-\alpha_s)-1-p^sn}$ and  $z_{\e,p(p^sk-\alpha_s)-1-p^sj}$ by 
$$ z_{\e,p^s(pk-1-n)-\alpha_s} \quad \text{and}  \quad z_{\e,p^s(pk-1-j)-\alpha_s}$$
respectively, since $p\alpha_s+1=p^s+\alpha_s$.
Finally, applying  Equation \eqref{plh} as many times as necessary, and recalling that we are supposing  $p$ odd,  we get
 \begin{equation}\label{apnj} (-1)^{p^sj} A(p^sn,p^sj) \equiv (-1)^j A(n,j) \pmod p.
 \end{equation}
\end{proof}

 As a consequence of Proposition \ref{p1}, the admissible expression of any non-admissible monomial  with label 
$(\e,p^sh_1-\alpha_s;0, p^sh_2-\alpha_s; \dots ;0, p^sh_m-\alpha_s)$ involves only generators in $\mathcal{T}_{(\e,s)}$.

That's the reason why, for any non-negative integer $s$, there is a well-defined  $\F_p$-algebra  ${\mathcal Q}_s^0$ generated by the set   $\{1\} \cup \mathcal{T}_{(0,s)}$ and  subject to
 relations 
 $$R(0,{p^sh-\alpha_s}, p^sn)=0 \quad \forall n\in \N_0.$$

Thus ${\mathcal Q}_0^0$ and  ${\mathcal Q}_1^0$ are the subalgebras of ${\mathcal Q}(p)$ generated by the sets $$\{1\}\cup\{z_{0, h}\, \vert \, h\in\Z\} \quad \text{and} \quad  
\{1\}\cup\{z_{0, ph-1}\, \vert \, h\in\Z\}$$ respectively. The former has been simply denoted by ${\mathcal Q}^0$ in Section~1. The arithmetic identity
\begin{equation}\label{palfa} 
p^{s+1}h-\alpha_{s+1}=p^{s}(ph-1)-\alpha_{s},
\end{equation}
 implies that ${\mathcal Q}_s^0 \supset {\mathcal Q}_{s+1}^0$.

\begin{lemma}\label{lem2} A monomial of type 
\begin{equation}\label{admz} z_I=z_{\e,p^sh_1-\alpha_s}z_{0,p^sh_2-\alpha_s}\cdots \, z_{0,p^sh_m-\alpha_s}
\end{equation} is admissible if and only if $h_i \,\geq \, ph_{i+1}$ for any $i=1, \dots , m-1$. 
\end{lemma}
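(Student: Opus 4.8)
The plan is to read the admissibility conditions for a monomial of the special shape \eqref{admz} straight from the definition and then collapse them to the stated inequalities by a short arithmetic reduction. The first thing to notice is that every factor of $z_I$ beyond the first carries exponent $\e=0$; in the notation of the general criterion $i_j \ge p\,i_{j+1}+\e_{j+1}$ this means $\e_{j+1}=0$ for every $j=1,\dots,m-1$, since those exponents belong to the factors sitting in positions $2,\dots,m$. In particular the exponent of the leading factor plays no role whatsoever in admissibility. Hence admissibility of $z_I$ is equivalent to the system
$$p^s h_j-\alpha_s\ \ge\ p\bigl(p^s h_{j+1}-\alpha_s\bigr),\qquad j=1,\dots,m-1.$$

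I would then treat a single such inequality. Expanding the right-hand side and regrouping the $\alpha_s$ terms turns it into $p^s h_j+(p-1)\alpha_s\ge p^{s+1}h_{j+1}$, and the decisive identity is $(p-1)\alpha_s=p^s-1$, immediate from the definition $\alpha_s=(p^s-1)/(p-1)$. Substituting this and dividing through by $p^s$ rewrites the condition as
$$h_j+1\ \ge\ p\,h_{j+1}+\tfrac{1}{p^s}.$$

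The only delicate point---and it is mild rather than a genuine obstacle---is the passage from this last inequality back to the clean integer statement. Since $h_j+1$ and $p\,h_{j+1}$ are integers while $0<p^{-s}\le 1$, the inequality holds exactly when the integer $h_j+1-p\,h_{j+1}$ is at least $1$, that is, precisely when $h_j\ge p\,h_{j+1}$. As every manipulation above is reversible, this yields the desired equivalence index by index, hence for $z_I$ as a whole.
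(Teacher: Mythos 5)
Your proof is correct and takes essentially the same route as the paper's: both reduce admissibility of $z_I$ to the inequalities $p^sh_i-\alpha_s \geq p\bigl(p^sh_{i+1}-\alpha_s\bigr)$ (the $\e$ of the leading factor being irrelevant, since only $\e_{j+1}$ for $j\geq 1$ enters the criterion), then use $(p-1)\alpha_s=p^s-1$ and divide by $p^s$. The only cosmetic difference is the final integrality step, which you phrase as ``an integer is $\geq$ a number in $(0,1]$ iff it is $\geq 1$'' while the paper computes the ceiling of $ph_{i+1}-\tfrac{p^s-1}{p^s}$; these are the same observation.
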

\begin{proof} Admissibility for a monomial of type \eqref{admz} is tantamount to the condition
$$ p^sh_i - \alpha_s \geq p (p^sh_{i+1} - \alpha_s) \quad \forall i \in \{ 1, \dots, m-1 \}. $$
Inequalities above are equivalent to
$$ h_i  \geq p h_{i+1} - \frac{p^s-1}{p^s} \quad \forall i \in \{ 1, \dots, m-1 \},$$
and the ceiling of the real number on the right side is precisely $ph_{i+1}$.
\end{proof}

\begin{proposition}\label{p2} An $\F_p$-linear basis for ${\mathcal Q}_s^0$ is given by the set $\mathcal B_{{\mathcal Q}_s^0}$ of its monic admissible monomials.
\end{proposition}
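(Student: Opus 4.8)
The plan is to verify the two defining properties of a basis in turn: linear independence, which is immediate, and spanning, which carries all the weight.

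Linear independence is free. By construction every element of $\mathcal B_{{\mathcal Q}_s^0}$ is an admissible monic monomial of ${\mathcal Q}(p)$ of the form \eqref{admz}, so $\mathcal B_{{\mathcal Q}_s^0}\subseteq{\mathcal B}$; since ${\mathcal B}$ is an $\F_p$-linear basis of ${\mathcal Q}(p)$ by \cite{CL}, any subset of it, in particular $\mathcal B_{{\mathcal Q}_s^0}$, is linearly independent.

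For spanning I would argue that $\mathcal B_{{\mathcal Q}_s^0}$ exhausts ${\mathcal Q}_s^0$ by straightening. As ${\mathcal Q}_s^0$ is generated as an $\F_p$-algebra by $\{1\}\cup\mathcal{T}_{(0,s)}$, it is spanned by $1$ and by all products $z_{0,p^sh_1-\alpha_s}\cdots z_{0,p^sh_m-\alpha_s}$, i.e.\ by the monomials of type \eqref{admz}, admissible or not. The key point is that reducing such a monomial to admissible form inside ${\mathcal Q}(p)$ never produces factors outside $\mathcal{T}_{(0,s)}$. Indeed, if a monomial of type \eqref{admz} fails admissibility at position $i$, then $h_i<ph_{i+1}$ by Lemma \ref{lem2}, and only an $R$-relation can act on the pair $z_{0,p^sh_i-\alpha_s}z_{0,p^sh_{i+1}-\alpha_s}$ (no factor $z_{1,\cdot}$ is present, so the $S$-relations never intervene). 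A short computation with the identity $p\alpha_s+1=p^s+\alpha_s$ identifies this relation as exactly $R(0,\,p^sh_{i+1}-\alpha_s,\,p^sn')$ with $n'=ph_{i+1}-h_i-1\geq 0$, which by Proposition \ref{p1} rewrites the pair using only generators of $\mathcal{T}_{(0,s)}$. This is precisely the closure property announced in the remark following Proposition \ref{p1}.

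It then remains to assemble the pieces. The straightening algorithm terminates in ${\mathcal Q}(p)$ by \cite{CL}, and since the step above shows that every reduction applied to a type-\eqref{admz} monomial again yields type-\eqref{admz} monomials, an induction on the number of reductions gives that the final admissible expression is an $\F_p$-combination of admissible monomials of type \eqref{admz}, that is, of elements of $\mathcal B_{{\mathcal Q}_s^0}$. Hence $\mathcal B_{{\mathcal Q}_s^0}$ spans ${\mathcal Q}_s^0$, and combined with independence it is a basis. I expect the only genuine obstacle to be the closure step: the precise matching of the Adem relation needed at a non-admissible position with one of the special relations $R(0,p^sk-\alpha_s,p^sn)$ covered by Proposition \ref{p1}. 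Once that is secured, both termination and independence are inherited verbatim from the ambient algebra ${\mathcal Q}(p)$.
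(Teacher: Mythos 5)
Your proposal is correct and follows essentially the same route as the paper: linear independence is inherited from the admissible basis $\mathcal B$ of ${\mathcal Q}(p)$ from \cite{CL}, and spanning follows because the straightening procedure of \cite{CL}, applied to a monomial of type \eqref{admz}, only ever invokes relations of the form $R(0,p^sk-\alpha_s,p^sn)$, which by Proposition \ref{p1} stay inside $\mathcal T_{(0,s)}$. The paper leaves the relation-matching implicit, whereas you verify it explicitly (identifying $n'=ph_{i+1}-h_i-1\geq 0$ via $p\alpha_s+1=p^s+\alpha_s$); this is a useful elaboration, not a different argument.
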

\begin{proof} In \cite{CL} it is explained the procedure to express any monomial in $\mathcal Q(p)$ as a sum of admissible monomials. As Proposition \ref{p1} shows, the generalized Adem relations required to complete such procedure starting from a monomial in  ${\mathcal Q}_s^0$   only involve generators actually available in the set at hands.
\end{proof}

So far, we have established the existence of the following descending chain of algebra inclusions: 
$${\mathcal Q}^0 ={\mathcal Q}_0^0 \supset {\mathcal Q}_1^0 \supset {\mathcal Q}_2 ^0\supset \dots \supset {\mathcal Q}_s^0 \supset {\mathcal Q}_{s+1}^0\supset \dots,$$

On the free $\F_p$-algebra $\F_p\langle \{1\} \cup \mathcal{T}_{(0,0)} \rangle$  we now define a monomorphism 
$\Phi$ acting on the generators as follows
\begin{equation}
\Phi (1) =1 \qquad \text{and} \qquad \Phi (z_{0,k})= z_{0,pk-1}.
\end{equation}
We set $\Phi^0= 1_{\F_p\langle \mathcal S_p \rangle}$ and $\Phi^s = \Phi \circ \Phi^{s-1}$ for $s \geq 1$.
\begin{proposition}\label{ppp} For each $s \geq 0$,
\begin{equation}\label{fi1} \Phi^s(z_{0,i_1} \cdots \, z_{0,i_m})= z_{0,p^si_1-\alpha_s} \cdots \; z_{0,p^si_m-\alpha_s},
\end{equation}
and
\begin{equation}\label{fi2}  \Phi^s(R(0,k,n))=R(0,p^sk-\alpha_s, p^sn).
\end{equation}
\end{proposition}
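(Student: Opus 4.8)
The plan is to establish \eqref{fi1} by induction on $s$ and then to obtain \eqref{fi2} as a formal consequence of \eqref{fi1} and Proposition \ref{p1}. Throughout I would use that $\Phi$ is an $\F_p$-algebra homomorphism, so that every iterate $\Phi^s$ is $\F_p$-linear and multiplicative and may therefore be applied one monomial at a time, and one generator at a time inside each monomial.

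For \eqref{fi1}, the base case $s=0$ holds because $\Phi^0$ is the identity and $\alpha_0=(p^0-1)/(p-1)=0$, whence $z_{0,p^0 i_h-\alpha_0}=z_{0,i_h}$. For the inductive step I would assume \eqref{fi1} for $s$ and apply $\Phi$ to both sides; by multiplicativity it is enough to follow a single factor. The inductive hypothesis sends $z_{0,i_h}$ to $z_{0,p^s i_h-\alpha_s}$, and one further application of $\Phi$ produces $z_{0,\,p(p^s i_h-\alpha_s)-1}$. The only arithmetic required is the identity $p\alpha_s+1=\alpha_{s+1}$ — equivalent to the relation $p^s+\alpha_s=\alpha_{s+1}$ underlying \eqref{palfa} — which rewrites the subscript as $p^{s+1}i_h-\alpha_{s+1}$. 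Reassembling the factors yields \eqref{fi1} at level $s+1$.

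For \eqref{fi2}, I would first write $R(0,k,n)$, using the abbreviation $A(n,j)$, as
$$R(0,k,n)=z_{0,pk-1-n}z_{0,k}+\sum_j(-1)^j A(n,j)\,z_{0,pk-1-j}z_{0,k-n+j},$$
a finite $\F_p$-linear combination of length-two monomials. Applying $\Phi^s$ and invoking \eqref{fi1} on each quadratic monomial replaces every subscript $\ell$ by $p^s\ell-\alpha_s$ while leaving the scalars $(-1)^j A(n,j)$ untouched: the leading term becomes $z_{0,p^s(pk-1-n)-\alpha_s}z_{0,p^sk-\alpha_s}$ and the generic summand becomes $z_{0,p^s(pk-1-j)-\alpha_s}z_{0,p^s(k-n+j)-\alpha_s}$. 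Term by term this is precisely the expression that Proposition \ref{p1} assigns to $R(0,p^sk-\alpha_s,p^sn)$, so the two polynomials coincide and \eqref{fi2} follows.

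The computations are essentially bookkeeping once the homomorphism property is in hand, and I do not expect a serious obstacle. The single substantive input is Proposition \ref{p1}: it is what guarantees that $R(0,p^sk-\alpha_s,p^sn)$, whose raw expansion via \eqref{Rkn} carries many binomial coefficients, collapses to exactly the image of $R(0,k,n)$ with no extra surviving terms. The only point demanding care is matching the index substitution $\ell\mapsto p^s\ell-\alpha_s$ produced by \eqref{fi1} against the reindexed form in Proposition \ref{p1}; the relevant vanishing of coefficients has already been absorbed into Lemma \ref{c3} and Corollary \ref{c1}, so no fresh divisibility argument is needed here.
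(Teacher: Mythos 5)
Your proof is correct and follows essentially the same route as the paper: induction on $s$ for \eqref{fi1} using the arithmetic identity behind \eqref{palfa}, with Proposition \ref{p1} supplying the term-by-term match that yields \eqref{fi2}. The only cosmetic difference is that you derive \eqref{fi2} directly from \eqref{fi1} rather than by a separate induction, which is a harmless repackaging of the paper's (very terse) argument.
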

\begin{proof} Equations \eqref{fi1} and \eqref{fi2} are trivially true for $s=0$. For $s\geq 1$ use an inductive argument taking into account \eqref{palfa} and Proposition \ref{p1}.
\end{proof}
\begin{proposition}\label{ppp2} Let $\pi : \F_p \langle \{1\} \cup \mathcal{T}_{(0,0)}\rangle \rightarrow \mathcal Q^0$ be the quotient map.There exists an algebra monomorphism $\phi$ such that the diagram 
\begin{equation}
\begin{diagram}
\node{\F_p\langle \{1\} \cup \mathcal{T}_{(0,0)} \rangle} 
    \arrow{e,t}{\Phi} 
    \arrow{s,l}{\pi} 
\node{\F_p\langle \{1\} \cup \mathcal{T}_{(0,0)} \rangle}
    \arrow{s,r}{\pi}  \\
\node{\mathcal{Q}^0}
    \arrow{e,b,..}{\phi} 
\node{\mathcal{Q}^0}
\end{diagram}
\end{equation}
commutes.
\end{proposition}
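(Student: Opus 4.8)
The plan is to obtain $\phi$ from the universal property of the quotient map $\pi$, and then to verify injectivity by tracking the effect of $\phi$ on the admissible-monomial basis.

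First I would recall that $\mathcal{Q}^0 = \F_p\langle \{1\}\cup\mathcal{T}_{(0,0)}\rangle / J$, where $J$ is the two-sided ideal generated by the relations $R(0,k,n)$ (with $k\in\Z$, $n\in\N_0$), so that $\ker\pi = J$. To produce a map $\phi$ with $\phi\circ\pi = \pi\circ\Phi$, it suffices to check that $\pi\circ\Phi$ annihilates $J$, i.e.\ that $\Phi(J)\subseteq J$. Since $\Phi$ is an algebra homomorphism and $J$ is generated as an ideal by the $R(0,k,n)$, it is enough to verify $\Phi(R(0,k,n))\in J$ for every $k,n$. This is immediate from Proposition \ref{ppp}: taking $s=1$ in \eqref{fi2} and recalling $\alpha_1 = 1$ gives $\Phi(R(0,k,n)) = R(0,pk-1,pn)$, which is again one of the defining relations of $\mathcal{Q}^0$ and hence lies in $J$. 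The universal property of $\pi$ then yields a unique algebra homomorphism $\phi$ making the square commute.

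The remaining and more delicate point is injectivity. I would argue on the basis $\mathcal B_{\mathcal{Q}^0}$ of admissible monomials supplied by Proposition \ref{p2}. Writing a basis element as $\bar z_{0,i_1}\cdots\bar z_{0,i_m}$ with $i_j\geq p\,i_{j+1}$, commutativity of the diagram together with \eqref{fi1} (again at $s=1$) gives
\[
\phi(\bar z_{0,i_1}\cdots\bar z_{0,i_m}) = \overline{z_{0,pi_1-1}\cdots z_{0,pi_m-1}}.
\]
By Lemma \ref{lem2} the monomial on the right is admissible precisely when $i_j\geq p\,i_{j+1}$, which is exactly the admissibility hypothesis on the source. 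Hence $\phi$ carries each basis monomial of $\mathcal{Q}^0$ to an element of the global admissible basis $\mathcal B$ of $\mathcal{Q}(p)$.

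To finish, I would observe that the assignment $i\mapsto pi-1$ is injective and preserves length, so distinct admissible monomials of $\mathcal{Q}^0$ are sent to distinct admissible monomials. Thus $\phi$ maps $\mathcal B_{\mathcal{Q}^0}$ bijectively onto a subset of $\mathcal B$, which is $\F_p$-linearly independent in $\mathcal{Q}(p)$; a linear map sending a basis to a linearly independent set is injective, so $\phi$ is a monomorphism. The step I expect to require the most care is precisely this last one: one must confirm that admissibility is not destroyed by $\Phi$, so that no two distinct basis monomials collapse after reduction to admissible form. This is exactly the content of Lemma \ref{lem2}; without it the images could fail to remain independent and injectivity would be in doubt.
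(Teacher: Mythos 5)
Your proposal is correct and follows essentially the same route as the paper: existence of $\phi$ via the universal property of $\pi$, using $\Phi(R(0,k,n))=R(0,pk-1,pn)$ from Proposition \ref{ppp} (Equation \eqref{fi2} at $s=1$), and injectivity by checking via Lemma \ref{lem2} that the admissible basis of Proposition \ref{p2} is carried to distinct admissible monomials. Your write-up merely makes explicit two points the paper leaves implicit (that $\ker\pi$ is the ideal generated by the relations, and that $i\mapsto pi-1$ being injective prevents collisions among images of basis elements), which is a harmless elaboration rather than a different argument.
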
 
\begin{proof}
By Equation \eqref{fi2}, it follows in particular that 
$$ \Phi (R(0,k,n))=R(0,pk-1, pn).$$
Therefore there exists a well-defined algebra map
$$ \phi:  z_{0,i_1}z_{0,i_2}\cdots z_{0,i_m} \in \mathcal Q^0 \longmapsto z_{0,pi_1-1}z_{0,pi_2-1}\cdots z_{0,pi_m-1} \in \mathcal Q^0.  $$
Such map is injective since the set $\mathcal B_{{\mathcal Q}_s^0}$ -- an $\F_p$-linear basis for $\mathcal Q^0$ according to Proposition \ref{p2} -- is mapped onto admissibles by Lemma \ref{lem2}. 
\end{proof}
\begin{corollary}\label{cz1} The algebra $\mathcal Q^0_s$ is isomorphic to its subalgebra $\mathcal Q^0_{s+1}$.
\end{corollary}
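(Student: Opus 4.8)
The plan is to exhibit the isomorphism concretely as the restriction of the monomorphism $\phi$ produced in Proposition \ref{ppp2}, after first identifying $\mathcal{Q}^0_s$ with the image $\phi^s(\mathcal{Q}^0)$. To begin, I would establish this identification. Iterating the commuting square of Proposition \ref{ppp2} gives $\pi\circ\Phi^s=\phi^s\circ\pi$, so Equation \eqref{fi1} descends to $\phi^s(z_{0,i_1}\cdots z_{0,i_m})=z_{0,p^si_1-\alpha_s}\cdots z_{0,p^si_m-\alpha_s}$ in $\mathcal{Q}^0$; in particular $\phi^s$ carries the generator $z_{0,k}$ to $z_{0,p^sk-\alpha_s}$, the arithmetic behind the exponent being the identity \eqref{palfa}. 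Hence $\phi^s$ sends the generating set $\{1\}\cup\{z_{0,k}\mid k\in\Z\}$ of $\mathcal{Q}^0$ onto $\{1\}\cup\mathcal{T}_{(0,s)}$, and since $\phi^s$ is an algebra homomorphism its image is precisely the subalgebra generated by the latter set, namely $\phi^s(\mathcal{Q}^0)=\mathcal{Q}^0_s$.

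Next I would restrict $\phi$ to $\mathcal{Q}^0_s$. Surjectivity onto $\mathcal{Q}^0_{s+1}$ is automatic, since $\phi(\mathcal{Q}^0_s)=\phi(\phi^s(\mathcal{Q}^0))=\phi^{s+1}(\mathcal{Q}^0)=\mathcal{Q}^0_{s+1}$. Injectivity is inherited from the injectivity of $\phi$ on all of $\mathcal{Q}^0$, which is guaranteed by Proposition \ref{ppp2}. Therefore $\phi|_{\mathcal{Q}^0_s}\colon\mathcal{Q}^0_s\to\mathcal{Q}^0_{s+1}$ is a bijective algebra homomorphism, hence an isomorphism. To record that it respects the length grading, I would note that $\phi$ takes each generator $z_{0,i}$ to a single generator $z_{0,pi-1}$, so it maps a monic monomial of length $m$ to one of length $m$; thus the isomorphism is one of length-graded algebras.

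The one point that is not purely formal, and which I regard as the crux, is the injectivity of $\phi$. This is exactly why the corollary must invoke Proposition \ref{ppp2} rather than being a tautology: injectivity there rests on Lemma \ref{lem2} together with Proposition \ref{p2}, i.e. on the fact that $\phi$ sends the admissible basis of $\mathcal{Q}^0$ into admissible monomials. Once this is in hand, the corollary follows formally from $\phi^s(\mathcal{Q}^0)=\mathcal{Q}^0_s$ and from $\phi$ being an algebra map.
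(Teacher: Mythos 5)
Your proposal is correct and follows essentially the same route as the paper: both identify $\mathcal{Q}^0_s$ with $\phi^s(\mathcal{Q}^0)$ via Propositions \ref{ppp} and \ref{ppp2}, and then take the restriction $\phi\mid_{\mathcal{Q}^0_s} : \Imm \phi^s \to \Imm\phi^{s+1}$ as the isomorphism. Your write-up simply makes explicit the details (generators mapping onto $\{1\}\cup\mathcal{T}_{(0,s)}$, inherited injectivity, length preservation) that the paper leaves implicit.
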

\begin{proof} 
By Propositions \ref{ppp} and \ref{ppp2}, we can argue that $\phi^s (\mathcal Q^0) = \mathcal Q^0_s$. Hence the map
$$ \phi\hspace{-0.15em}\mid_{{\mathcal Q}_s^0} : \Imm \phi^s \lr \Imm \phi^{s+1}$$
gives the desired isomorphism.
\end{proof}
Corollary \ref{cz1} proves Theorem \ref{teo1} for $\e=0$.

\section{A second descending chain of subalgebras}
The aim of this Section is to provide a proof for the $\e=1$ case of Theorem \ref{teo1}.  We choose to follow as close as possible¶ the line of attack put forward in Section 2. 
\begin{proposition}\label{s1} Let $(k,n,s)$ a fixed triple in $\Z 	\times \N_0 \times \N$. In \eqref{Skn} the polynomial $S(1, p^sk, p^sn)$  is actually equal to
$$ z_{1, p^s(pk-n)}z_{1,p^sk} +\sum_j(-1)^{j+1}A(n,j) \, z_{1,p^s(pk-j)}z_{1,p^s(k-n+j)}.$$
\end{proposition}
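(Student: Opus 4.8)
The plan is to specialize the defining formula \eqref{Skn} for $S$ at $\e=1$ and at the indices $k\mapsto p^sk$, $n\mapsto p^sn$, and then to kill the extraneous binomial coefficients exactly as was done for $R$ in Proposition \ref{p1}. First I would note that setting $\e=1$ immediately annihilates the entire third summand in \eqref{Skn}, since it carries the factor $(1-\e)$; thus $S(1,p^sk,p^sn)$ reduces to its first two terms. Writing out those terms with $k\mapsto p^sk$ and $n\mapsto p^sn$ gives
\begin{equation}
z_{1,p(p^sk)-p^sn}z_{1,p^sk}+\sum_l(-1)^{l+1}A(p^sn,l)\,z_{1,p(p^sk)-l}z_{1,p^sk-p^sn+l}.
\end{equation}

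Next I would invoke Lemma \ref{c3}: the coefficient $A(p^sn,l)$ is divisible by $p$ unless $l\equiv 0\pmod{p^s}$, so only the indices $l=p^sj$ survive. Substituting $l=p^sj$ collapses the sum to a sum over $j$, with the surviving subscripts becoming $p(p^sk)-p^sj=p^s(pk-j)$ in the first factor and $p^sk-p^sn+p^sj=p^s(k-n+j)$ in the second; the leading term likewise rewrites as $z_{1,p^s(pk-n)}z_{1,p^sk}$. This is the step where I reuse the arithmetic bookkeeping of Proposition \ref{p1} verbatim, except that here there is no shift by $\alpha_s$ to track, since the $\e=1$ generators are indexed by pure multiples $p^sk$ rather than by $p^sk-\alpha_s$; that simplification is what makes the $\e=1$ subscripts cleaner than the $\e=0$ case.

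Finally I would simplify the surviving coefficient. The sign $(-1)^{p^sj+1}$ equals $(-1)^{j+1}$ because $p$ is odd, and Equation \eqref{apnj} from the proof of Proposition \ref{p1} already records the congruence $A(p^sn,p^sj)\equiv A(n,j)\pmod p$, obtained by repeated application of Corollary \ref{c1}. Combining these yields $(-1)^{p^sj+1}A(p^sn,p^sj)\equiv(-1)^{j+1}A(n,j)\pmod p$, which produces exactly the claimed expression. I do not anticipate a genuine obstacle: the argument is a transcription of Proposition \ref{p1} with the third summand dropped and the $\alpha_s$ shifts removed. The only point requiring mild care is confirming that the index $l=p^sn$ of the leading term is consistent with, and not double-counted against, the $j=n$ instance of the sum — the same harmless bookkeeping that occurs in \eqref{Rkn}.
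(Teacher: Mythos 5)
Your proposal is correct and follows the paper's proof essentially verbatim: specialize \eqref{Skn} at $\e=1$ (where the $(1-\e)$ factor kills the third summand), apply Lemma \ref{c3} to discard all terms with $l\not\equiv 0\pmod{p^s}$, substitute $l=p^sj$, and conclude via the congruence \eqref{apnj} established in the proof of Proposition \ref{p1}. The only differences are cosmetic — you make explicit the vanishing of the third summand and split \eqref{apnj} into its sign and binomial parts, both of which the paper handles implicitly.
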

\begin{proof} By definition (see \ref{Skn}),
\begin{equation}\label{snj} S(1, p^sk,p^sn)=z_{1,p^s(pk-n)}z_{1,p^sk} +\sum_l(-1)^{l+1}A(p^sn,l) z_{1,p^{s+1}k-l}z_{1,p^sk-p^sn+l}.
\end{equation}
According to Lemma \ref{c3},  the only possible non-zero coefficients in  the sum above are those with $l\equiv 0$ mod $p^s$. Setting $l=p^sj$, the polynomial \eqref{snj} becomes
$$ z_{1,p^{s+1}k-p^sn}z_{1,p^sk} +\sum_j(-1)^{p^sj+1}A(p^sn, p^sj) z_{1,p^{s+1}k-p^sj}z_{1,p^sk-p^sn+p^sj}.
$$
 The result now follows from Equation \eqref{apnj}. 
\end{proof}
Proposition \ref{s1} implies that relations of type $S(1,p^sh,p^sn)$ only involve generators of type $z_{1,p^sm}$. therefore the admissible expression of any non-admissible monomial  with label 
$(1,p^sh_1;1, p^sh_2; \dots ;1, p^sh_m)$ only involves generators in the set
\begin{equation}\label{tem}
\mathcal T'_{(1,s)}= \{ z_{1,p^sm} \, \vert \, m \in \Z \}.
\end{equation}
So it makes sense to define ${\mathcal Q}_s^1$ as the $\F_p$-algebra generated by the set   $\{1\} \cup \mathcal T'_{(1,s)}$ and subject to relations $$S(1,{p^sh}, p^sn)=0 \quad \forall n\in \N_0.$$
Each $\mathcal Q^1_s$ is actually a subalgebra of $\mathcal Q(p)$. We have inclusions ${\mathcal Q}_s^1 \supset {\mathcal Q}_{s+1}^1$. In Section~1, the algebra ${\mathcal Q}_0^1$ has been simply denoted  by ${\mathcal Q}^1$.
\begin{lemma}\label{lem31}
A monomial of type  
\begin{equation}\label{z1p} z_{1,p^sh_1}z_{1,p^sh_2}\cdots \, z_{1,p^sh_m}
\end{equation} in ${\mathcal Q}_s^1 \subset \mathcal Q(p)$  is admissible if and only if  
$ h_i  \geq p h_{i+1} +1 \quad \forall i \in \{ 1, \dots, m-1 \}. $
\end{lemma}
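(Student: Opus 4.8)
The plan is to reduce the admissibility condition for a monomial of type \eqref{z1p} directly to the general admissibility definition given in Section~1, exactly as was done for the $\e=0$ case in Lemma \ref{lem2}. Recall that a monomial $z_I$ with label $(\e_1,i_1;\dots;\e_m,i_m)$ is admissible precisely when $i_j \geq p\,i_{j+1}+\e_{j+1}$ for every $j=1,\dots,m-1$. In the situation at hand each generator is of the form $z_{1,p^sh_i}$, so every $\e_j=1$ and $i_j=p^sh_j$.

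First I would write down what the defining inequality becomes for these specific exponents. Since the second index of the $(i+1)$-st factor is $p^sh_{i+1}$ and its $\e$-value is $1$, admissibility requires
\begin{equation}
p^sh_i \;\geq\; p\,(p^sh_{i+1}) + 1 \qquad \forall\, i \in \{1,\dots,m-1\}.
\end{equation}
Dividing through by $p^s$ gives the equivalent condition
\begin{equation}
h_i \;\geq\; p\,h_{i+1} + \frac{1}{p^s} \qquad \forall\, i \in \{1,\dots,m-1\}.
\end{equation}
The final step is to observe that since the $h_i$ are integers and $0 < 1/p^s \leq 1$, the inequality $h_i \geq p h_{i+1} + 1/p^s$ holds if and only if $h_i \geq p h_{i+1}+1$; indeed the smallest integer strictly exceeding $p h_{i+1}$ is $p h_{i+1}+1$, which is precisely the ceiling of the right-hand side. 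This mirrors the ceiling computation used in Lemma \ref{lem2}.

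I do not anticipate any serious obstacle here, as the argument is purely arithmetic and parallels Lemma \ref{lem2} almost verbatim; the only point requiring a moment's care is the integrality step, namely confirming that $\lceil p h_{i+1} + 1/p^s \rceil = p h_{i+1}+1$ for every $s \geq 1$, which relies on $p^s \geq p > 1$ so that $0 < 1/p^s < 1$. Everything else is routine substitution into the admissibility definition.
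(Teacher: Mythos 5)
Your proof is correct and follows essentially the same route as the paper's: substitute $i_j = p^s h_j$, $\e_j = 1$ into the admissibility definition, divide by $p^s$, and take the ceiling of $p h_{i+1} + 1/p^s$ to get $p h_{i+1}+1$. The only cosmetic point is that the ceiling argument also covers $s=0$ (where $1/p^s = 1$ and the ceiling is again $ph_{i+1}+1$), so you need not restrict to $s\geq 1$.
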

\begin{proof}
By definition, the monomial \eqref{z1p} is admissible if and only if $$ p^sh_i  \geq p (p^sh_{i+1}) +1 \quad \forall i \in \{ 1, \dots, m-1 \}. $$
Inequalities above are equivalent to
$$ h_i  \geq p h_{i+1} + \frac{1}{p^s} \quad \forall i \in \{ 1, \dots, m-1 \},$$
and the ceiling of the real number on the right side is precisely $ph_{i+1}+1$.
\end{proof}
\begin{proposition}\label{s2} An $\F_p$-linear basis for ${\mathcal Q}_s^1$ is given by the set $\mathcal B_{{\mathcal Q}_s^1}$ of its monic admissible monomials.
\end{proposition}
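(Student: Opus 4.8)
The plan is to reproduce, almost verbatim, the strategy that established Proposition \ref{p2} for the $\e=0$ chain, splitting the claim into its two standard halves: that $\mathcal B_{{\mathcal Q}_s^1}$ spans ${\mathcal Q}_s^1$ over $\F_p$, and that it is linearly independent. The raw material is the reduction algorithm of \cite{CL}, which writes every monomial of $\mathcal Q(p)$ as an $\F_p$-combination of admissible monomials through iterated application of the generalized Adem relations \eqref{Rkn} and \eqref{Skn}.

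For the spanning half I would start from an arbitrary monomial $z_{1,p^sh_1}\cdots z_{1,p^sh_m}$ of ${\mathcal Q}_s^1$ and run this algorithm. Each step straightens a non-admissible adjacent pair $z_{1,p^sa}z_{1,p^sb}$, and since both factors carry $\e=1$, the only relation called upon is of type $S(1,p^sk,p^sn)$. Proposition \ref{s1} then guarantees that every generator produced again lies in $\mathcal T'_{(1,s)}$; in particular the mixed term $(1-\e)\sum_j(\cdots)z_{1,\cdot}z_{0,\cdot}$ of \eqref{Skn} is absent because $\e=1$, so no $z_{0,\cdot}$ ever appears. Hence the reduction stays inside ${\mathcal Q}_s^1$ and terminates on a combination of monic admissible monomials of ${\mathcal Q}_s^1$, proving that $\mathcal B_{{\mathcal Q}_s^1}$ spans.

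Independence is immediate once one notes that admissibility of \eqref{z1p} is intrinsic to $\mathcal Q(p)$, so $\mathcal B_{{\mathcal Q}_s^1}\subseteq \mathcal B$; as $\mathcal B$ is an $\F_p$-linear basis of $\mathcal Q(p)$ by \cite{CL}, any subset of it is linearly independent. The one point demanding care is the closure assertion in the spanning half: a priori the \cite{CL} algorithm could invoke $R$-relations, or $S$-relations with $\e=0$, and thereby leak generators outside $\mathcal T'_{(1,s)}$. That this does not happen is precisely the content of Proposition \ref{s1} together with the vanishing of the correction term at $\e=1$, so once that Proposition is available the argument is a routine transcription of the $\e=0$ case.
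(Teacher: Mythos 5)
Your proposal is correct and takes essentially the same route as the paper: the paper's proof of Proposition \ref{s2} is a verbatim transcription of that of Proposition \ref{p2}, namely the straightening procedure of \cite{CL} combined with the closure property of Proposition \ref{s1}, with linear independence inherited from the fact that $\mathcal B_{{\mathcal Q}_s^1}$ sits inside the admissible basis $\mathcal B$ of $\mathcal Q(p)$. Your write-up simply makes explicit the spanning/independence split and the vanishing of the $(1-\e)$ correction term, both of which the paper leaves implicit.
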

\begin{proof} Follows verbatim the proof of Proposition \ref{p2}, just replacing ``Proposition \ref{p1}'' by ``Proposition \ref{s1}'' and $\mathcal Q^0_s$ by  $\mathcal Q^1_s$.
\end{proof}
We are now going to prove that the subalgebras in the descending chain
$$ {\mathcal Q}^1 ={\mathcal Q}_0^1 \supset {\mathcal Q}_1^1 \supset {\mathcal Q}_2 ^1\supset \dots \supset {\mathcal Q}_s^1 \supset {\mathcal Q}_{s+1}^1\supset \dots,
 $$
 are all isomorphic. To this aim we consider  the injective endomorphism  $\Psi$ on the free $\F_p$-algebra 
 $\F_p \langle \{1\} \cup \mathcal T'_{1,0} \rangle$ by setting
 \begin{equation}
\Psi (1) =1 \qquad \text{and} \qquad \Psi (z_{1,k})= z_{1,pk}.
\end{equation}
 
 \begin{proposition}\label{ppp3} Let $\pi' : \F_p \langle \{1\} \cup \mathcal{T'}_{(1,0)}\rangle \rightarrow \mathcal Q^1$ be the quotient map.There exists an algebra monomorphism $\psi$ such that the diagram 
\begin{equation}
\begin{diagram}
\node{\F_p\langle \{1\} \cup \mathcal{T'}_{(1,0)} \rangle} 
    \arrow{e,t}{\Psi} 
    \arrow{s,l}{\pi'} 
\node{\F_p\langle \{1\} \cup \mathcal{T'}_{(1,0)} \rangle}
    \arrow{s,r}{\pi}  \\
\node{\mathcal{Q}^1}
    \arrow{e,b,..}{\psi} 
\node{\mathcal{Q}^1}
\end{diagram}
\end{equation}
commutes.
\end{proposition}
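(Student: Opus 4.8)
The plan is to mirror, line for line, the argument used for Proposition \ref{ppp2}, transporting each ingredient from the $\e=0$ setting into the $\e=1$ setting. The decisive first step is to verify that $\Psi$ carries every defining relation of $\mathcal Q^1$ into a relation that already vanishes there; concretely, I would establish the identity
\begin{equation}\label{psiS}
\Psi(S(1,k,n)) = S(1,pk,pn),
\end{equation}
which is the exact counterpart of the identity $\Phi(R(0,k,n))=R(0,pk-1,pn)$ exploited in Proposition \ref{ppp2}. To prove \eqref{psiS} I would apply $\Psi$ term by term to the defining expression \eqref{Skn} of $S(1,k,n)$, recalling that the final summand disappears because $\e=1$, so that $\Psi(S(1,k,n))$ equals $z_{1,p(pk-n)}z_{1,pk}+\sum_j(-1)^{j+1}A(n,j)\,z_{1,p(pk-j)}z_{1,p(k-n+j)}$. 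This is precisely the expression that Proposition \ref{s1} computes for $S(1,pk,pn)$ in the case $s=1$, so the two sides of \eqref{psiS} coincide.

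Granting \eqref{psiS}, the relations $S(1,k,n)=0$ that define $\mathcal Q^1$ are sent by $\pi'\circ\Psi$ to the relations $S(1,pk,pn)=0$, which hold in $\mathcal Q^1$ by definition. Hence $\pi'\circ\Psi$ annihilates the relation ideal and factors through $\pi'$, yielding a well-defined algebra map
$$\psi:\; z_{1,i_1}z_{1,i_2}\cdots z_{1,i_m}\in\mathcal Q^1\;\longmapsto\; z_{1,pi_1}z_{1,pi_2}\cdots z_{1,pi_m}\in\mathcal Q^1$$
that makes the square commute.

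For injectivity I would invoke Lemma \ref{lem31} and Proposition \ref{s2} exactly as in Proposition \ref{ppp2}. By Proposition \ref{s2} the monic admissible monomials form an $\F_p$-basis $\mathcal B_{\mathcal Q^1}$ of $\mathcal Q^1$; by Lemma \ref{lem31} in the case $s=0$ a monomial $z_{1,h_1}\cdots z_{1,h_m}$ is admissible exactly when $h_i\geq ph_{i+1}+1$, while the same Lemma in the case $s=1$ shows that its image $z_{1,ph_1}\cdots z_{1,ph_m}$ is admissible under the very same inequalities. Since the assignment $h_i\mapsto ph_i$ is injective on labels, $\psi$ sends distinct basis elements to distinct admissible monomials; these are linearly independent, so $\psi$ is injective.

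The single point demanding genuine care -- the main obstacle -- is the bookkeeping behind \eqref{psiS}: one must check that applying $\Psi$ to the general-form sum of \eqref{Skn}, leaving the coefficients $A(n,j)$ untouched, reproduces exactly the coefficients appearing in Proposition \ref{s1}. Once this alignment is confirmed, every remaining step is formally identical to its analogue in Section 2, and the proposition follows.
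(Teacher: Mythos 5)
Your proposal is correct and follows essentially the same route as the paper's proof: establish $\Psi(S(1,k,n))=S(1,pk,pn)$ via Proposition \ref{s1} (the paper states this for all powers $\Psi^s$ at once, which it reuses later, but the $s=1$ case is all the proposition needs), deduce the factorization of $\pi'\circ\Psi$ through $\pi'$, and get injectivity from Proposition \ref{s2} and Lemma \ref{lem31} since admissible basis monomials are carried to distinct admissible monomials. No gaps; your spelled-out injectivity step is just a more explicit version of the paper's one-line argument.
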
 
\begin{proof} Since $ \Psi^s(z_{1,i_1} \cdots \, z_{1,i_m})= z_{1,p^si_1} \cdots \; z_{1,p^si_m}$,
by Proposition \ref{s1} we argue that
\begin{equation}\label{fi4}  \Psi^s(S(1,k,n))=S(1,p^sk, p^sn).
\end{equation}
Therefore there exists a well-defined algebra map
$$ \psi:  z_{1,i_1} \cdots \, z_{1,i_m} \in \mathcal Q^1 \longmapsto z_{1,pi_1} \cdots \, z_{1,pi_m} \in \mathcal Q^1.  $$
Such map is injective since the set $\mathcal B_{{\mathcal Q}_s^1}$ -- an $\F_p$-linear basis for $\mathcal Q^1$ according to Proposition \ref{s2} -- is mapped onto admissibles by Lemma \ref{lem31}. 
\end{proof}
\begin{corollary}\label{cz3} The algebra $\mathcal Q^1_s$ is isomorphic to its subalgebra $\mathcal Q^1_{s+1}$.
\end{corollary}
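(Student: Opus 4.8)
The plan is to reproduce the proof of Corollary \ref{cz1} with $\psi$ in place of $\phi$, since Section 3 has been built to parallel Section 2 exactly. Everything hinges on identifying the iterated image $\psi^s(\mathcal Q^1)$ with the abstractly defined subalgebra ${\mathcal Q}_s^1$.

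First I would record, directly from the description of $\psi$ in Proposition \ref{ppp3}, that $\psi^s$ sends a monomial $z_{1,i_1}\cdots z_{1,i_m}$ to $z_{1,p^si_1}\cdots z_{1,p^si_m}$; hence the image $\psi^s(\mathcal Q^1)$ is the subalgebra generated by $\{1\}\cup\mathcal{T'}_{(1,s)}$. Next, using Equation \eqref{fi4}, I would observe that $\psi^s$ carries the defining relations $S(1,k,n)$ of $\mathcal Q^1$ precisely onto the relations $S(1,p^sk,p^sn)$ that define ${\mathcal Q}_s^1$. Because $\psi$ is a monomorphism, so is $\psi^s$, and therefore $\psi^s$ realizes an isomorphism of $\mathcal Q^1$ onto the subalgebra generated by $\mathcal{T'}_{(1,s)}$ subject to those relations, that is, onto ${\mathcal Q}_s^1$. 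This gives $\psi^s(\mathcal Q^1)={\mathcal Q}_s^1$ for every $s\geq 0$.

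With that identity in hand, the conclusion is immediate: the monomorphism $\psi$ restricts to a map $\psi\hspace{-0.15em}\mid_{{\mathcal Q}_s^1}:\Imm\psi^s\lr\Imm\psi^{s+1}$, that is, ${\mathcal Q}_s^1\lr{\mathcal Q}_{s+1}^1$. It is injective as a restriction of an injective map, and it is surjective onto ${\mathcal Q}_{s+1}^1=\psi^{s+1}(\mathcal Q^1)=\psi(\psi^s(\mathcal Q^1))=\psi({\mathcal Q}_s^1)$. Hence it is the sought isomorphism.

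The only delicate point is the identification in the second paragraph: one must be sure that passing to the image neither collapses nor introduces relations, so that $\psi^s(\mathcal Q^1)$ is genuinely isomorphic to $\mathcal Q^1$ and coincides with ${\mathcal Q}_s^1$. This is guaranteed exactly as in the $\e=0$ case, by the facts that $\mathcal B_{{\mathcal Q}_s^1}$ is an $\F_p$-linear basis (Proposition \ref{s2}) and that $\psi$ carries admissibles to admissibles (Lemma \ref{lem31}); the injectivity already proved in Proposition \ref{ppp3} then upgrades to the required bijection onto ${\mathcal Q}_s^1$. All remaining verifications are routine.
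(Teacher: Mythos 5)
Your proposal is correct and follows essentially the same route as the paper: both identify $\psi^s(\mathcal Q^1)={\mathcal Q}_s^1$ via Equation \eqref{fi4} and Proposition \ref{ppp3}, then obtain the isomorphism as the restriction $\psi\hspace{-0.15em}\mid_{{\mathcal Q}_s^1}:\Imm\psi^s\lr\Imm\psi^{s+1}$. Your version merely spells out the details (generation of the image, preservation of relations, injectivity and surjectivity of the restriction) that the paper's two-line proof leaves implicit.
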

\begin{proof} 
By Equation \eqref{fi4} and Proposition \ref{ppp3}, we can argue that $\psi^s (\mathcal Q^1) = \mathcal Q^1_s$. Thus, the desired isomorphism is given by
$$ \psi\hspace{-0.15em}\mid_{{\mathcal Q}_s^1} : \Imm \psi^s \lr \Imm \psi^{s+1}.$$
\end{proof}

\section{Further substructures}
For each $s\in \N_0$, we define $V_s$ to be the $\F_p$-vector subspace of  ${\mathcal Q}(p)$ generated by the set  of monomials 
$${\mathcal U}_s=\{z_{1,p^sh_1-\alpha_s}z_{0,p^sh_2-\alpha_s}\cdots \, z_{0,p^sh_m-\alpha_s}\, \vert \; m\geq 2, \; (h_1, \dots ,h_m) \in \Z^m \,  \}.$$

Equation \ref{palfa} implies that  $V_s \supset V_{s+1}$. 
None of the $V_s$'s is a subalgebra of $\mathcal Q(p)$, nevertheless, by Proposition \ref{p1} and the nature of relations \eqref{Rkn} it follows that $V_s$ can be endowed  with  a right ${\mathcal Q}^0_s$-module structure just by considering multiplication in $\mathcal Q(p)$. 
By using once again Lemma \ref{lem2} and the argument along the proof of Proposition \ref{p2}, we get
\begin{proposition}\label{p8} An $\F_p$-linear basis for $V_s$ is given by the set $\mathcal B_{V_s}$ of its monic admissible monomials.
\end{proposition}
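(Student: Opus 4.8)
The plan is to follow the template already established for the subalgebras $\mathcal{Q}_s^0$ in Proposition \ref{p2}, transporting the argument to the present module setting. The statement to prove is that the monic admissible monomials in $\mathcal{U}_s$ span $V_s$ and are linearly independent. Linear independence is immediate: the monomials in $\mathcal{U}_s$ are genuine monomials of $\mathcal Q(p)$ sitting inside the admissible basis $\mathcal B$ (by \cite{CL}), so any admissible ones among them are automatically $\F_p$-linearly independent as elements of $\mathcal Q(p)$. Hence the real content is the spanning claim, namely that every element of $\mathcal{U}_s$ can be rewritten as an $\F_p$-linear combination of admissible monomials that themselves still lie in $\mathcal{U}_s$ (equivalently, in $V_s$).

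First I would recall the rewriting procedure of \cite{CL}: any monomial of $\mathcal{Q}(p)$ is reduced to a sum of admissibles by repeatedly locating a leftmost inadmissible adjacent pair $z_{\e_i,i}z_{\e_{i+1},i+1}$ and replacing it, via a generalized Adem relation of type \eqref{Rkn} or \eqref{Skn}, by a sum of monomials that are \emph{smaller} in the moment/lexicographic ordering used in \cite{CL}; the process terminates because that ordering is well-founded. The key observation, exactly as in the proof of Proposition \ref{p2}, is that when the process is applied to a monomial of the form
$$
z_{1,p^sh_1-\alpha_s}z_{0,p^sh_2-\alpha_s}\cdots z_{0,p^sh_m-\alpha_s},
$$
every inadmissible pair one meets is of the shape governed by Proposition \ref{p1}. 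Indeed the first (leftmost) factor carries $\e=1$ and all later factors carry $\e=0$, and after any single Adem reduction this shape is preserved, since the relevant relation $R(\e,p^sk-\alpha_s,p^sn)$ (with $\e=1$ at the first position and $\e=0$ elsewhere) produces, by Proposition \ref{p1}, only generators from $\mathcal{T}_{(1,s)}$ or $\mathcal{T}_{(0,s)}$ in their respective slots.

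Concretely, I would check the inductive invariant: a monomial with label $(1,p^sh_1-\alpha_s;0,p^sh_2-\alpha_s;\dots;0,p^sh_m-\alpha_s)$, upon one application of the pertinent generalized Adem relation, is replaced by a sum of monomials of the very same form. For an inadmissible pair at positions $(i,i+1)$ with $i\geq 2$ this is precisely the content of Proposition \ref{p1} (both subscripts of the pair carry $\e=0$, so relation $R(0,p^sk-\alpha_s,p^sn)$ applies and all resulting generators lie in $\mathcal{T}_{(0,s)}$). For the pair at positions $(1,2)$ the first factor carries $\e=1$; here Proposition \ref{p1} with $\e=1$ shows the left factor stays in $\mathcal{T}_{(1,s)}$ and the right factor stays in $\mathcal{T}_{(0,s)}$. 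In all cases the length is unchanged (relations \eqref{Rkn} are length-homogeneous) and the $\e$-pattern $(1,0,\dots,0)$ is retained, so each summand is again in $\mathcal{U}_s$. Iterating until no inadmissible pair remains expresses the original monomial as a combination of \emph{admissible} monomials in $\mathcal{U}_s$, proving that $\mathcal B_{V_s}$ spans $V_s$.

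The main obstacle, though a mild one, is verifying that the $\e$-pattern is truly stable in the edge case of the leftmost pair: one must confirm that the $\e=1$ label never migrates rightward or spawns an extra $\e=0$ generator in position $1$, and that no stray $\e$-labels are introduced by the $(1-\e)$-term in \eqref{Skn}. This is handled by noting that relation \eqref{Rkn} (the only one triggered, since every inadmissible pair here is of $R$-type with second factor $\e=0$) never changes the first component $\e$ of either generator it rewrites, so the pattern $(1,0,\dots,0)$ is preserved verbatim. With that checked, the argument of Proposition \ref{p2}, invoking Lemma \ref{lem2} to identify which of the resulting monomials are admissible, carries over \emph{mutatis mutandis}, and the proposition follows.
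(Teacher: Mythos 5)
Your proposal is correct and follows essentially the same route as the paper, whose proof simply invokes Lemma \ref{lem2} together with the argument of Proposition \ref{p2} (i.e., the rewriting procedure of \cite{CL} combined with Proposition \ref{p1} to see that all generators produced stay in the relevant sets). Your additional verification that the $\e$-pattern $(1,0,\dots,0)$ is preserved under the $R$-type Adem relations, and your explicit remark on linear independence via the admissible basis $\mathcal B$ of $\mathcal Q(p)$, merely spell out details the paper leaves implicit.
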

\begin{proposition}\label{pult} The map between sets
$$ z_{1,i_1}z_{0,i_2}\cdots \,  z_{0,i_m} \in {\mathcal U}_0 \;  \longmapsto \; z_{1,pi_1-1}z_{0,pi_2-1}\cdots \, z_{0,pi_m-1} \in  {\mathcal U}_0$$
can be extended to a well-defined injective $\F_p$-linear map
$\lambda : V_0 \lr V_0.$
Moreover
\begin{equation}\label{lll}
\lambda^s (V_0) = V_s \subset V_0. 
\end{equation}
\end{proposition}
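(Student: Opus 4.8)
The plan is to mimic, almost verbatim, the two-step strategy already deployed for $\phi$ (Propositions \ref{ppp}, \ref{ppp2}) and $\psi$ (Proposition \ref{ppp3}), but now on the vector-space level rather than the algebra level, since $V_0$ is only a module and not a subalgebra. First I would lift the proposed set map to the free side: define an $\F_p$-linear endomorphism $\Lambda$ of the free algebra $\F_p\langle\{1\}\cup\mathcal{T}_{(0,0)}\cup\{z_{1,k}\}\rangle$ that restricts to $\Phi$ on the $z_{0,\ast}$ generators and sends $z_{1,k}\mapsto z_{1,pk-1}$; on a word of type $z_{1,i_1}z_{0,i_2}\cdots z_{0,i_m}$ this manifestly produces $z_{1,pi_1-1}z_{0,pi_2-1}\cdots z_{0,pi_m-1}$, the displayed image. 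Because $\Lambda$ acts on the leading generator exactly by the shift $k\mapsto pk-1$, the same arithmetic identity \eqref{palfa} that drove \eqref{fi1} gives the iterate formula $\Lambda^s(z_{1,i_1}z_{0,i_2}\cdots z_{0,i_m})=z_{1,p^si_1-\alpha_s}z_{0,p^si_2-\alpha_s}\cdots z_{0,p^si_m-\alpha_s}$, so $\Lambda^s$ carries the spanning set $\mathcal{U}_0$ of $V_0$ onto $\mathcal{U}_s$, the spanning set of $V_s$; this is precisely the content of \eqref{lll} once $\lambda$ is identified with the induced map.

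Next I would check that $\Lambda$ descends through the quotient to a well-defined $\F_p$-linear map $\lambda:V_0\to V_0$. The relations that must be respected are those of type $R(\e,k,n)$ governing how a $z_{1,\ast}$ followed by $z_{0,\ast}$'s is rewritten; by \eqref{fi2} (the $\e=1$ instance), $\Lambda(R(1,k,n))=R(1,pk-1,pn)$, which is again a relation of the admissible-rewriting system available inside $V_0$. Hence $\Lambda$ sends the relation ideal governing $V_0$ into itself, and so induces a map on the quotient; since $V_0$ carries the right $\mathcal{Q}^0$-module structure and $\Lambda$ intertwines the generator shift with $\Phi$, the induced map $\lambda$ is the desired linear map, compatible with multiplication by $\mathcal{Q}^0$ in the appropriate twisted sense.

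The one genuine point requiring care — and the step I expect to be the main obstacle — is \emph{injectivity} of $\lambda$. The argument is the same as for $\phi$ and $\psi$: by Proposition \ref{p8} the monic admissible monomials $\mathcal{B}_{V_0}$ form an $\F_p$-basis of $V_0$, so it suffices to show $\lambda$ carries this basis to a linearly independent set, and for that it is enough that $\lambda$ sends admissibles to admissibles injectively. Lemma \ref{lem2} does exactly this bookkeeping: a word $z_{1,i_1}z_{0,i_2}\cdots z_{0,i_m}$ is admissible iff $i_j\geq pi_{j+1}+\e_{j+1}$ for all $j$, and under the shift $i_j\mapsto pi_j-1$ (equivalently $h_j=i_j$ in the $s=1$ form of \eqref{admz}) admissibility is preserved by the criterion $h_j\geq ph_{j+1}$; since the shift $k\mapsto pk-1$ is itself injective on indices, distinct admissible monomials go to distinct admissible monomials. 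Therefore $\lambda$ maps a basis injectively into $\mathcal{B}_{V_0}$, which forces injectivity of $\lambda$ and completes the verification of \eqref{lll}. The subtlety to watch is purely that the \emph{first} generator carries the $\e=1$ label while the rest carry $\e=0$, so one must confirm the mixed admissibility condition (involving $\e_2=0$ at the first junction) still transforms correctly under Lemma \ref{lem2}; this is immediate but should be stated.
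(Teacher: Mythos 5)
Your proposal is correct and follows essentially the same route as the paper's own proof: lift the index shift $k \mapsto pk-1$ to the free algebra (this is exactly the paper's map $\Lambda$ in \eqref{bda}), check that it carries the relations $R(\e,k,n)$ to $R(\e,pk-1,pn)$ so that it descends to a map $\lambda$ on the quotient $V_0$, compute the iterates to obtain \eqref{lll}, and deduce injectivity from the fact that the admissible basis (Proposition \ref{p8} together with Lemma \ref{lem2}, whose statement already allows an arbitrary $\e$ in the first slot, so your final worry is moot) is mapped injectively to admissibles --- a step the paper actually leaves implicit, so your treatment is if anything more complete. One small citation fix: the identity $\Lambda(R(1,k,n))=R(1,pk-1,pn)$ does not follow from \eqref{fi2}, which is stated only for $\e=0$; it follows from Proposition \ref{p1}, which holds for arbitrary $\e\in\{0,1\}$ and is what the paper invokes.
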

\begin{proof}
As in the proof of Proposition \ref{ppp}, Equation \ref{palfa}  and Proposition \ref{p1} show that the polynomial $R(\e,k,n) \in \F_p \langle \mathcal S_p \rangle$ in mapped onto $R(\e, p^sk-\alpha_s,p^sn)$ through the $s$-th power of the $\F_p$-linear map
\begin{equation}\label{bda} \Lambda:  z_{\e_11,i_1}z_{\e_2,i_2}\cdots \, z_{\e_m,i_m} \,  \in \,  \F_p \langle \mathcal S_p \rangle \, \longmapsto \, z_{\e_1,pi_1-1}z_{\e_2,pi_2-1}\cdots \, z_{\e_m,pi_m-1}  \, \in \, \F_p \langle \mathcal S_p \rangle.
\end{equation}
Hence there are two maps $\bar{\Lambda}$ and $\lambda$ suct that the diagram \\
\begin{equation}
\begin{tikzcd}
\F_p \langle \mathcal S_p \rangle  \arrow{r}{\Lambda} &  \F_p \langle \mathcal S_p \rangle \\
\F_p \langle \mathcal U_0 \rangle  \arrow{d}{\pi''} \arrow[hook]{u} \arrow[dotted]{r}{\bar{\Lambda}}& F_p \langle \mathcal U_0 \rangle \arrow[hook]{u} \arrow{d}{\pi''}   \\
V_0 \arrow[dotted]{r}{\lambda} & V_0
\end{tikzcd}
\end{equation}
commutes, where $\pi'' : \F_p \langle \mathcal U_0 \rangle \rightarrow V_0$ is the quotient map.
Finally, taking into account Equation \ref{palfa},   one checks that
\begin{equation}\label{lamb}
\lambda^s(z_{1,i_1}z_{0,i_2}\cdots \, z_{0,i_m})= z_{1,p^si_1-\alpha_s}z_{0,p^si_2-\alpha_s}\cdots \, z_{0,p^si_m-\alpha_s}.
\end{equation}
Since Equation \eqref{lamb} implies \eqref{lll}, the proof is over.
\end{proof}
We now introduce a category $\mathcal K$ whose objects are couples $(M,R)$, with $R$ being any ring, and $M$ any right $R$-module. A morphism between two objects $(M,R)$ and $(N,S)$ is given by a couple
$(f, \omega)$ where  $f : M \rightarrow N$ is group homomorphism and $\omega: R \rightarrow S$ is a ring homomorphism, furthermore
$$ f (mr) = f(m) \,  \omega(r)  \qquad \forall (m,r) \in  (M,R).$$
The category $\mathcal K$ is partially ordered by ``inclusions''. More precisely we say that
$$ (M,R) \subseteq (M',R')$$ if $M$ is a subgroup of $M'$ and $R$ is a subring of $R'$.

\begin{theorem}\label{t2} The objects in $\mathcal K$ of the descending chain
$$ ({V}_0, \mathcal Q_0^0) \supset ({V}_1, \mathcal Q_1^0) \supset \dots \supset ({V}_s, \mathcal Q_s^0) \supset ({V}_{s+1}, \mathcal Q_{s+1}^0)  \supset \dots \, $$
are all isomorphic.
\end{theorem}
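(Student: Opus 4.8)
The plan is to promote the two maps already constructed in Section 2 and in the present section into a single morphism of the category $\mathcal K$. Proposition \ref{ppp2} supplies the algebra monomorphism $\phi : \mathcal Q^0 \to \mathcal Q^0$, and Proposition \ref{pult} supplies the injective $\F_p$-linear map $\lambda : V_0 \to V_0$; crucially, both descend from the \emph{same} endomorphism $\Lambda$ of the free algebra $\F_p\langle \mathcal S_p \rangle$ introduced in \eqref{bda}, namely the algebra map determined by $z_{\e,k} \mapsto z_{\e,pk-1}$. The first step is to show that the pair $(\lambda,\phi)$ is a morphism $(V_0,\mathcal Q_0^0)\to(V_0,\mathcal Q_0^0)$ in $\mathcal K$, i.e. that
$$ \lambda(m \cdot r) = \lambda(m)\,\phi(r) \qquad \forall (m,r) \in (V_0, \mathcal Q^0). $$

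To verify this compatibility I would lift $m$ and $r$ to monomials $\tilde m \in \F_p\langle \mathcal U_0 \rangle$ and $\tilde r \in \F_p\langle \{1\} \cup \mathcal T_{(0,0)} \rangle$. Since the right $\mathcal Q^0$-module structure on $V_0$ is simply multiplication inside $\mathcal Q(p)$, the product $m\cdot r$ is represented by the concatenation $\tilde m\,\tilde r$, which is again a word of type $\mathcal U_0$ (one leading $z_{1,\cdot}$ followed by factors $z_{0,\cdot}$). Because $\Lambda$ is multiplicative on the free algebra, $\Lambda(\tilde m\,\tilde r)=\Lambda(\tilde m)\,\Lambda(\tilde r)$; passing through the quotient maps $\pi''$ and $\pi$ of Propositions \ref{pult} and \ref{ppp2}, and using that $\Lambda$ sends each defining relation $R(\e,k,n)$ to the relation $R(\e,pk-1,pn)$ (Proposition \ref{pult}), this descends to the required identity. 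This is where all the arithmetic of Section 2 is actually spent, and it is the main obstacle: one must check that the admissible reductions rewriting $m\cdot r$ on the $V_0$-side correspond, under $\Lambda$, to the admissible reductions on the image side. The point that makes this work is that those reductions only invoke $R$-type relations (the leading $z_{1,\cdot}$ is followed exclusively by $z_{0,\cdot}$'s, so no $S$-relation ever intervenes), and Proposition \ref{pult} guarantees precisely that $\Lambda$ carries $R$-relations to $R$-relations.

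Once $(\lambda,\phi)$ is a morphism in $\mathcal K$, so is each iterate $(\lambda^s,\phi^s)$, and I would exhibit it as an isomorphism $(V_0,\mathcal Q_0^0)\to(V_s,\mathcal Q_s^0)$. On the module side, $\lambda^s$ is injective, being a power of the injective $\lambda$, and satisfies $\lambda^s(V_0)=V_s$ by \eqref{lll}, so $\lambda^s : V_0 \to V_s$ is a bijection of $\F_p$-vector spaces. On the ring side, $\phi^s(\mathcal Q^0)=\mathcal Q_s^0$ and $\phi^s$ is an algebra isomorphism onto its image by Corollary \ref{cz1}. A morphism in $\mathcal K$ whose two components are bijective is automatically an isomorphism, its inverse being $((\lambda^s)^{-1},(\phi^s)^{-1})$: the inverse compatibility follows formally from $f(m\cdot r)=f(m)\,\omega(r)$ by applying $f^{-1}$ and $\omega^{-1}$. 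Hence $(V_s,\mathcal Q_s^0)\cong(V_0,\mathcal Q_0^0)$ for every $s$.

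Since isomorphism in $\mathcal K$ is an equivalence relation, all objects of the descending chain are isomorphic to the common object $(V_0,\mathcal Q_0^0)$, and therefore to one another. The isomorphism between two consecutive terms $(V_s,\mathcal Q_s^0)$ and $(V_{s+1},\mathcal Q_{s+1}^0)$ is realized directly by the restriction $(\lambda|_{V_s},\phi|_{\mathcal Q_s^0})$, which maps $V_s=\lambda^s(V_0)$ bijectively onto $V_{s+1}=\lambda^{s+1}(V_0)$ and $\mathcal Q_s^0$ isomorphically onto $\mathcal Q_{s+1}^0$, completing the argument.
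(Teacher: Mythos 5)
Your proposal is correct and follows essentially the same route as the paper: the paper's proof likewise exhibits the isomorphism between consecutive terms as the pair of restrictions $(\lambda\hspace{-0.15em}\mid_{V_s}, \phi\hspace{-0.15em}\mid_{{\mathcal Q}_s^0})$, invoking Proposition \ref{pult} for the vector-space isomorphism and Corollary \ref{cz1} for the algebra isomorphism. The only difference is that you explicitly verify the $\mathcal K$-morphism compatibility $\lambda(m\cdot r)=\lambda(m)\,\phi(r)$ by lifting to the free algebra and using multiplicativity of $\Lambda$ --- a point the paper leaves implicit --- so your write-up is, if anything, slightly more complete.
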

\begin{proof}
By Proposition \ref{pult} it follows that 
$ \lambda\hspace{-0.15em}\mid_{V_s} : V_s \lr V_{s+1}$
is an isomorphism between $\F_p$-vector spaces. 
Thus, recalling Corollary \ref{cz1}, the desired isomorphism in $\mathcal K$ is given by
$$ (\lambda\hspace{-0.15em}\mid_{V_s}, \phi\hspace{-0.15em}\mid_{{\mathcal Q}_s^0} ) : ({V}_s, \mathcal Q_s^0) \; \lr \;  ({V}_{s+1}, \mathcal Q_{s+1}^0).$$
\end{proof}

\section{A final remark}
Theorem 1.1 in \cite{BolMex} says that no strict algebra monomorphism in $\mathcal Q(p)$ exists when $p$ is odd. Hence there is no chance to find algebra endomorhisms over $\mathcal Q(p)$ extending the maps $\phi$ and $\psi$ defined in Sections 2 and 3 respectively. Just to give an idea about the obstructions you come up with, consider  the $\F_p$-linear map
$$  \Theta : \F_p \langle \mathcal S_p \rangle \; \lr \; \F_p \langle \mathcal S_p \rangle $$
defined on monomials as follows
$$  \Theta (z_{\e_1,i_1} \cdots \, z_{\e_m,i_m} ) =  z_{\e_1,pi_1} \cdots \, z_{\e_m,pi_m}.$$
Neither the map $\Theta$ nor the map $\Lambda$ introduced in Section 4 stabilizes the entire set \eqref{relaz}. Indeed, take for instance 
$$ R(0, 0,0)= z_{0,-1}z_{0,0} \quad \text{and} \quad S(1, 0,0)= z_{1,0}z_{1,0}.$$
The polynomial
\begin{equation}\label{psi0} \Theta (R(0, 0,0)) =z_{0,-p}z_{0,0}
\end{equation}
does not belong to the set $\mathcal R_p$. In fact, the only  polynomial in $\mathcal R_p$ containing \eqref{psi0} as a summand is 
$$R(0,0,p-1)= z_{0,-1-(p-1)}z_{0,0} + z_{0,-1}z_{0,-p+1}. $$
Similarly, 
the polynomial 
$$\label{phi0}\Lambda (S(1, 0,0))= z_{1,-1}z_{1,-1}
$$
does not belong to the set $\mathcal R_p$, since it consists of a single admissible monomial, whereas each element in $\mathcal R_p$ always contains a non-admissible monomial among its summands.

\end{document}